\documentclass[11pt]{amsart}
\usepackage{amssymb, amsfonts, latexsym, amsthm, amsmath, verbatim, enumerate}
\pdfoutput=1
\usepackage{soul} 
\usepackage{paralist}
\usepackage[all]{xy}
\usepackage{amscd}
\usepackage[english]{babel}
\usepackage[margin=1in]{geometry}
\usepackage{tikz-cd}
\usepackage{hyperref}
\usetikzlibrary{arrows}
\theoremstyle{plain}
\newtheorem{thm}{Theorem}[section]
\newtheorem*{thm*}{Theorem}
\newtheorem{prop}[thm]{Proposition}
\newtheorem{cor}[thm]{Corollary}
\newtheorem{lem}[thm]{Lemma}

\newtheorem{examp}[thm]{Example}
\newtheorem{fact}[thm]{Fact}
\theoremstyle{definition}
\newtheorem{defn}[thm]{Definition}

\newcommand{\E}{\mathfrak{E}}
\newcommand{\f}{\mathbb{F}}

\newcommand{\tq}{\mathit{\tilde{H}^*}}
\newcommand{\tu}{\mathit{\tilde{H}}} 

\newcommand{\s}{\mathfrak{s}}
\newcommand{\dzt}{\delta_{\mathbb{Z}/2}}
\newcommand{\dztzf}{\delta_{\mathbb{Z}/4}}

\newcommand{\dg}{\mathit{\delta}_G}
\newcommand{\dglow}{\mathit{\underline{\delta}}_G}
\newcommand{\dgup}{\mathit{\overline{\delta}}_G}
\newcommand{\ttz}{\theta^3_H} 
\newcommand{\pin}{\mathrm{Pin}(2)} 
\newcommand{\hfi}{\mathit{HFI}}
\newcommand{\swf}{\mathit{SWF}}
\newcommand{\swfhg}{\mathit{SWFH}^G}
\newcommand{\swfz}{\mathit{SWFH}^{\mathbb{Z}/4}}
\newcommand{\undd}{\underline{\delta}}
\newcommand{\ovrr}{\overline{\delta}}
\newcommand{\bv}{\mathcal{V}^+}
\parindent 15pt
\linespread{1} 
\parskip 2pt
\title{A remark on $\mathrm{Pin}(2)$-equivariant Floer homology}
\author{Matthew Stoffregen}
\begin{document}

\begin{abstract}
In this remark, we show how the monopole Fr{\o}yshov invariant, as well as the analogues of the Involutive Heegaard Floer correction terms $\underline{d},\bar{d}$, are related to the $\mathrm{Pin}(2)$-equivariant Floer homology $\mathit{SWFH}^G_*$.  We show that the only interesting correction terms of a $\mathrm{Pin}(2)$-space are those coming from the subgroups $\mathbb{Z}/4$, $S^1$, and $\mathrm{Pin}(2)$ itself.  
\end{abstract} 
\maketitle

\section{Introduction} \label{sec:intro} 
In \cite{ManolescuPin}, Manolescu resolved the triangulation conjecture, establishing that there exist non-triangulable manifolds in all dimensions at least $5$.  The proof relies on the construction of $\mathrm{Pin}(2)$-equivariant Seiberg-Witten Floer homology, where $\mathrm{Pin}(2)$ is the group consisting of two copies of the complex unit circle, with a map $j$ interchanging the two copies and so that $ij=-ji$ and $j^2=-1$.  
 
Let $\mathbb{F}$ denote the field with two elements.   As $S^1$-equivariant monopole Floer homology associates to a three-manifold with $\mathrm{spin}^c$ structure a $H^*(BS^1)=\f[U]$-module, $\mathrm{Pin}(2)$-equivariant Floer homology associates to a rational homology three-sphere with spin structure a $H^*(B\mathrm{Pin}(2))=\f[q,v]/(q^3)$-module, where cohomology is taken with $\mathbb{F}$-coefficients.  From the module structure of $\mathrm{Pin}(2)$-equivariant Floer homology one obtains three invariants of homology cobordism \[\alpha, \beta, \gamma: \ttz \rightarrow \mathbb{Z},\] where $\ttz$ is the integral homology cobordism group of integral homology three-spheres.  These invariants satisfy:
\[ \alpha(Y) \equiv \beta(Y) \equiv \gamma(Y)\equiv \mu(Y) \bmod{2} \]   
and
\[\alpha(-Y) = -\gamma(Y), \; \beta(-Y)=-\beta(Y).\]
In particular, these properties for $\beta$ show that there is no element $Y \in \ttz$ of order $2$ with $\mu(Y)=1$.  Galewski-Stern and Matumoto \cite{GalewskiStern} and \cite{Matumoto} showed that there exist nontriangulable manifolds in all dimensions at least $5$ if and only if there exists an element $Y \in \ttz$ of order $2$ with $\mu(Y)=1$, from which Manolescu's disproof of the triangulation conjecture follows.   

Let $G=\pin$.  Since the introduction of Manolescu's $G$-equvariant Floer homology, denoted $\mathit{SWFH}^G_*(Y,\s)$, other versions of Floer homologies with symmetries beyond the $S^1$-symmetry have become available.  Lin \cite{flin} constructed a $\pin$-equivariant refinement of monopole Floer homology in the setting of Kronheimer-Mrowka \cite{KM}.  In the setting of Heegaard Floer homology introduced by Ozsv{\'a}th-Szab{\'o} in \cite{OzSz1}, \cite{OzSz2}, Hendricks and Manolescu  \cite{HendricksManolescu} point out that naturality questions make it difficult to define a $G$-equivariant version of Heegaard Floer homology.  However, they proceed by considering the subgroup $\mathbb{Z}/4=\langle j \rangle \subset G$ and define a Heegaard Floer analogue of $\mathit{SWFH}^G$ with respect to this smaller group, denoted $\mathit{HFI}(Y,\s)$.  As for the above-mentioned theories, $\hfi(Y,\s)$ is a module over $H^*(B\mathbb{Z}/4)=\f[U,Q]/(Q^2)$.  Using $\mathit{HFI}(Y,\s)$, they associate two homology cobordism invariants $\underline{d}(Y,\s)$, $\overline{d}(Y,\s)$, from the module structure.  However, $\underline{d}$ and $\overline{d}$ do not generally reduce to the Rokhlin invariant mod $2$.  

The purpose of this note is to relate the homology cobordism invariants obtained using theories equivariant with respect to different groups (especially the groups $S^1$, $\mathbb{Z}/4$ and $G$ itself).  In particular, we will see that, roughly speaking, all homology cobordism invariants that are constructed from Manolescu's homotopy type using the Borel homology of a subgroup of $\pin$ are determined by the invariants defined using $\mathbb{Z}/4$, $S^1$ and $G$.  For a precise statement, see Theorem \ref{thm:main}   

We will work in the context of Manolescu's construction of $G$-equivariant Floer homology, that is, $\swfhg$.  

We recall that in order to define $\mathit{SWFH}^G$, Manolescu first associates to a rational homology sphere with spin structure $(Y,\s)$ a $G$-equivariant stable homotopy type, denoted $\swf(Y,\s)$.  Then $\swfhg_*(Y,\s)$ is constructed from $\swf(Y,\s)$ by taking the $G$-equivariant Borel homology
\[\swfhg_*(Y,\s)=\tilde{H}^G_*(\swf(Y,\s)).\]

 Using $\mathbb{Z}/4 =\langle j \rangle \subset G$, we may also consider the $\mathbb{Z}/4$-Borel homology of $\mathit{SWF}(Y,\s)$.  We define 
\[\mathit{SWFH}^{\mathbb{Z}/4}_*(Y,\s)=\tilde{H}^{\mathbb{Z}/4}_*(\mathit{SWF}(Y,\s)).\]
Then $\mathit{SWFH}^{\mathbb{Z}/4}_*(Y,\s)$ has a $H^*(B\mathbb{Z}/4)=\f[U,Q]/(Q^2)$-module structure, from which we will define below homology cobordism invariants $\underline{\delta}(Y,\s)\leq \overline{\delta}(Y,\s)$, which should correspond, respectively, to the invariants $\underline{d}(Y,\s)/2$ and $\overline{d}(Y,\s)/2$ of \cite{HendricksManolescu}.  It is natural to ask to what extent these $\mathbb{Z}/4$ invariants are determined by $\alpha, \beta$ and $\gamma$, and, more generally, by the $\f[q,v]/(q^3)$-module structure of  $\mathit{SWFH}^G_*(Y,\s)$.  We show in Theorem \ref{thm:z4dvsg} how to partially determine $\underline{\delta}(Y,\s)$ and $\overline{\delta}(Y,\s)$ from $\mathit{SWFH}^G_*(Y,\s)$, but that in general $\underline{\delta}(Y,\s)$ and $\overline{\delta}(Y,\s)$ are not determined.  

First, we show that although the $S^1$-Fr{\o}yshov invariant $\delta$ is not determined by $\underline{\delta}$ and $\overline{\delta}$, it is determined by $\swfz$.  
\begin{thm}\label{thm:dvsz4d}
Let $(Y,\s)$ be a rational homology three-sphere with spin structure.  Then 
\[ \delta(Y,\s)=\frac{1}{2} (\min \{ m \equiv 2\mu(Y,\s)+1 \bmod{2} \mid \exists x \in \swfz_m(Y,\s), \; x \in \mathrm{Im} \; U^\ell \; \text{for all} \; \ell \geq 0, x \not \in \mathrm{Im} \; Q\}-1)\]
\end{thm}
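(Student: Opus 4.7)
The plan is to reduce the computation of $\delta(Y,\s)$ from $\swfz(Y,\s)$ to an analysis of the $S^1$-fixed points of $\swf(Y,\s)$, using a localization argument together with a Thom-isomorphism computation, and then tracking the extra $+1$ degree shift coming from the $Q$-action.

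First I would recall that for a spin rational homology sphere, the $S^1$-fixed point set of $\swf(Y,\s)$ is a representation sphere $S^{V_0}$, where the residual action of $j\in\pin$ satisfies $j^2=-1$.  Hence $V_0\cong\mathbb{C}^k$ as a $\mathbb{Z}/4$-representation (with $j$ acting as multiplication by $i$), for some integer $k=k(Y,\s)$.  By definition, $\delta(Y,\s)$ is half the minimum grading of the infinite $U$-tower in $\mathit{SWFH}^{S^1}_*(Y,\s)$, and for the $\pin$-space $\swf(Y,\s)$ this minimum equals $2\delta(Y,\s)$ after the formal-dimension shift built into Manolescu's grading convention is applied (with the parity governed by $\mu(Y,\s)$).

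Next I would compute the $U$-periodic part of $\swfz(Y,\s)$ by localization, reducing it to the $\mathbb{Z}/4$-Borel homology of $S^{V_0}$.  Applying the Thom isomorphism to the $\mathbb{Z}/4$-representation $V_0\cong\mathbb{C}^k$ yields
\[
\widetilde{H}^{\mathbb{Z}/4}_*(S^{V_0}) \;\cong\; \Sigma^{2k}\bigl(\f[U,Q]/Q^2\bigr),
\]
where the Thom class sits in grading $2k$ and $Q$ lowers degree by $1$.  As an $\f[U]$-module this splits as two infinite towers in gradings $2k$ and $2k+1$, with $Q$ sending the upper tower isomorphically onto the lower.  Consequently, the image of $Q$ inside the $U$-periodic part is exactly the lower tower, so the $U$-infinitely divisible elements \emph{not} in $\mathrm{Im}\,Q$ form precisely the upper tower, whose minimum grading is $2k+1=2\delta(Y,\s)+1$.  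This is the quantity $m$ minimized in the statement; rearranging gives $\delta(Y,\s)=(m-1)/2$.  The parity constraint $m\equiv 2\mu(Y,\s)+1\pmod 2$ simply records the fact that this upper tower is concentrated in odd gradings.

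The main obstacle will be the localization step itself: rigorously identifying the $U$-periodic part of $\swfz(Y,\s)$ with $\widetilde{H}^{\mathbb{Z}/4}_*(S^{V_0})$.  Since $(S^{V_0})^{\mathbb{Z}/4}=\{0,\infty\}\cong S^0$, a naive application of the $\mathbb{Z}/4$-Borel localization theorem (which sees only $\mathbb{Z}/4$-fixed points) does not immediately yield $S^{V_0}$ itself.  Instead one must use the $\pin$-CW structure of $\swf(Y,\s)$ to argue that all the infinitely $U$-divisible classes in $\swfz(Y,\s)$ already live on the $S^1$-fixed locus $S^{V_0}$, while the free $\pin$-cells attached away from $S^{V_0}$ contribute only $U$-torsion.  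Once this identification is established, the grading comparison above completes the proof.
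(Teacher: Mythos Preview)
Your proposal has a genuine gap. You are right that the free $\pin$-cells contribute only $U$-torsion, so every infinitely-$U$-divisible class in $\swfz_*(Y,\s)$ lies in the image of $\iota_*\colon\tilde{H}^{\mathbb{Z}/4}_*(S^{V_0})\to\swfz_*(Y,\s)$. But this does \emph{not} identify the infinitely-$U$-divisible submodule with $\tilde{H}^{\mathbb{Z}/4}_*(S^{V_0})$ itself: the map $\iota_*$ generally has a kernel, governed by the connecting homomorphism from the free cells, and that kernel is precisely what determines where the tower starts. Your assertion $2k+1=2\delta(Y,\s)+1$ amounts to saying that $\delta$ can be read off from the dimension of the fixed-point representation alone; if that were so, $\delta$ would carry no more information than $\mu$, contradicting the nontriviality of the Fr{\o}yshov invariant. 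Localization only computes the $U$-inverted module and cannot recover the bottom of the tower.

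There is a second issue: the condition $x\notin\mathrm{Im}\,Q$ refers to the full module $\swfz_*(Y,\s)$. A $U$-torsion class $y$ can satisfy $Qy=x$ with $x$ in the tower, so $\mathrm{Im}\,Q$ restricted to the infinitely-$U$-divisible part is not determined by the fixed-point computation. (A minor correction: the $S^1$-fixed set is $(\tilde{\mathbb{R}}^s)^+$, on which $j$ acts by $-1$ and $j^2$ trivially; it is not $\mathbb{C}^k$ with $j$ acting by $i$.)

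The paper circumvents both problems by never passing to the fixed-point set. It introduces an auxiliary invariant $\delta_{\mathbb{Z}/2}$ and uses two Gysin sequences, for $\mathbb{Z}/2\subset S^1$ and for $\mathbb{Z}/2\subset\mathbb{Z}/4$, to prove first $\delta=\delta_{\mathbb{Z}/2}$ and then $\delta_{\mathbb{Z}/2}=\delta_{\mathbb{Z}/4}$. Exactness of the Gysin triangle converts the condition $Qx=0$ (dual to $x\notin\mathrm{Im}\,Q$) into membership in the image of the change-of-group map, and the entire argument takes place in the non-localized modules.
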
 
We next relate the $S^1$ and $\mathbb{Z}/4$-invariants with those coming from $G$.  Here, even given $\swfhg_*(Y,\s)$, it is not possible to specify $\delta(Y,\s)$, $\underline{\delta}(Y,\s)$, or $\ovrr(Y,\s)$, although we have the following theorems.    
\begin{thm}\label{thm:dvsg}
Let $(Y,\s)$ be a rational homology three-sphere with spin structure.  Let 
\[ \delta_G(Y,\s)=\frac{1}{2} (\min \{ m \equiv 2\mu(Y,\s)+2 \bmod{4} \mid \exists x \in \swfhg_m(Y,\s), \; x \in \mathrm{Im} \; v^\ell \; \text{for all} \; \ell \geq 0, x \not \in \mathrm{Im} \; q\}-2).\]  Then 
\[ \delta(Y,\s)=\delta_G(Y,\s) \; \text{or} \; \delta_G(Y,\s)+1.\]
\end{thm}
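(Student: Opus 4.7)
I would compare $\swfhg_*(Y,\s)$ with $\mathit{SWFH}^{S^1}_*(Y,\s)$ through the inclusion $S^1\subset G$. The Borel construction realizes this as a free $\mathbb{Z}/2=G/S^1$ cover $EG\times_{S^1}\swf(Y,\s)\to EG\times_G\swf(Y,\s)$, whose Gysin long exact sequence reads
\[\cdots\to\swfhg_{n+1}(Y,\s)\xrightarrow{q}\swfhg_n(Y,\s)\xrightarrow{\tau}\mathit{SWFH}^{S^1}_n(Y,\s)\xrightarrow{\pi}\swfhg_n(Y,\s)\xrightarrow{q}\swfhg_{n-1}(Y,\s)\to\cdots,\]
with $\pi$ the covering pushforward and $\tau$ the transfer. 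Since the restriction $H^*(BG)\to H^*(BS^1)$ sends $v\mapsto U^2$ and $q\mapsto 0$, one has projection formulae $\tau(v^\ell x)=U^{2\ell}\tau(x)$ and $\pi(U^2 y)=v\,\pi(y)$, and on tower classes, where the deck transformation acts trivially, $\tau\pi=1+1=0$ in $\f$. Setting $m_G=2\delta_G+2$ and $m_{S^1}=2\delta$, the theorem amounts to $m_G-m_{S^1}\in\{0,2\}$.

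For the inequality $m_{S^1}\leq m_G$ (i.e.\ $\delta\leq\delta_G+1$), I would take $x\in\swfhg_{m_G}(Y,\s)$ realizing the minimum in the definition of $\delta_G$. By exactness, the condition $x\notin\mathrm{Im}\,q$ forces $\tau(x)\neq 0$, and the projection formula gives $\tau(x)\in\mathrm{Im}\,U^\ell$ for every $\ell\geq 0$. Thus $\tau(x)$ is a $U$-tower class in $\mathit{SWFH}^{S^1}_{m_G}(Y,\s)$, forcing $m_{S^1}\leq m_G$.

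For the reverse inequality $m_G\leq m_{S^1}+2$ (i.e.\ $\delta_G\leq\delta$), let $y_0\in\mathit{SWFH}^{S^1}_{m_{S^1}}$ be the bottom of the $U$-tower and pick $y_1\in\mathit{SWFH}^{S^1}_{m_{S^1}+2}$ with $Uy_1=y_0$. Since $\tau\pi(y_1)=(1+g^*)y_1=2y_1=0$, by exactness $\pi(y_1)\in\mathrm{Im}\,q$, so $\pi(y_1)$ is not itself a good class. I would split into two cases: if $\pi(y_1)=0$, then $y_1\in\mathrm{Im}\,\tau$ and any lift $x$ with $\tau(x)=y_1$ automatically satisfies $x\notin\mathrm{Im}\,q$ (because $\tau$ vanishes on $\mathrm{Im}\,q$ by exactness); iterating the argument on the $U^{2\ell}$-divisors $y_{2\ell+1}$ of $y_1$ and using the projection formula produces a class in $\swfhg_{m_{S^1}+2}$ whose image in $\swfhg/\mathrm{Im}\,q$ is infinitely $v$-divisible. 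If $\pi(y_1)\neq 0$, one writes $\pi(y_1)=qz$ and runs the parallel argument on $z\in\swfhg_{m_{S^1}+3}$. Finiteness of each graded piece of $\swfhg$ then lets one select a representative honestly in $\bigcap_\ell\mathrm{Im}\,v^\ell$ while remaining outside $\mathrm{Im}\,q$, giving $m_G\leq m_{S^1}+2$.

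The main obstacle is the second direction, specifically strengthening ``$v$-divisibility modulo $\mathrm{Im}\,q$'' (which the projection formula delivers immediately) to a strict element of $\bigcap_\ell\mathrm{Im}\,v^\ell$ that still escapes $\mathrm{Im}\,q$; removing the $\mathrm{Im}\,q$ correction coherently for all $\ell$ requires a stabilization argument using that each graded piece of $\swfhg(Y,\s)$ is finite-dimensional. A cleaner alternative is to run the Serre spectral sequence $E_2^{s,t}=H^s(B\mathbb{Z}/2;\mathit{SWFH}^{S^1}_t(Y,\s))\Rightarrow\swfhg_{s+t}(Y,\s)$: its $d_3$-differential is governed by $d_3(U)=q^3$, which enforces $q^3=0$ in $H^*(BG)$, and one reads off directly from the $E_\infty$-page that the lowest $v$-tower class of $\swfhg$ outside $\mathrm{Im}\,q$ sits at most two degrees above the lowest $U$-tower class of $\mathit{SWFH}^{S^1}$.
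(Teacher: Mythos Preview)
Your setup via the Gysin sequence for $S^1\subset G$ is exactly the tool the paper uses, and your first direction ($m_{S^1}\leq m_G$) is correct and matches the paper's argument. The difference is that the paper works in \emph{cohomology} and then dualizes at the very end, whereas you work directly in homology; this is what creates the gap you yourself flag in the second direction.

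In cohomology the argument is clean: the dual conditions ``$v^\ell x\neq 0$ for all $\ell$'' and ``$qx=0$'' are forward conditions, and exactness of the Gysin triangle immediately gives $x=\iota^* y$ with $y$ $U$-nontorsion (and conversely, if $y\in \tilde H^{4N+2\mu+2}_{S^1}(X)$ is $U$-nontorsion then $\iota^* y$ is $v$-nontorsion by the asymptotic model, and $q\iota^* y=0$ by exactness). That is the whole proof of the paper's Proposition~3.5; the homological statement in the theorem is then literally the linear-algebraic dual over $\f$, since for finite-dimensional graded pieces one has $\bigcap_\ell \mathrm{Im}\,v^\ell=(\text{$v$-torsion})^{\perp}$ and $\mathrm{Im}\,q=(\ker q)^{\perp}$, so ``$\exists x\in\bigcap_\ell\mathrm{Im}\,v^\ell$ with $x\notin\mathrm{Im}\,q$'' is equivalent to ``$\exists y$ $v$-nontorsion with $qy=0$''.

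Your second direction, by contrast, tries to build an honestly infinitely $v$-divisible class by hand. The case split on $\pi(y_1)$ does not resolve: even granting $\tau\pi(y_1)=0$, in the case $\pi(y_1)=qz$ the class $z$ lives in the wrong residue class mod $4$ for $\delta_G$, and in the case $\pi(y_1)=0$ your iteration only yields $x\in\mathrm{Im}\,v^\ell+\mathrm{Im}\,q$ for each $\ell$, which as you note is weaker than required. The stabilization you invoke (``finiteness of each graded piece'') is essentially equivalent to the duality argument above and is not automatic from what you have written; the spectral sequence alternative would work but is substantially more machinery than needed. The fix is simply to pass to cohomology, prove the equivalent nontorsion statement there in two lines, and dualize---this is what the paper does.
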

\begin{thm}\label{thm:z4dvsg}
Let $(Y,\s)$ be a rational homology three-sphere with spin structure.  Let 
\[\underline{\delta}_G(Y,\s)=\frac{1}{2} (\min \{ m \equiv 2\mu(Y,\s)+2 \bmod{4} \mid \exists x \in \swfhg_m(Y,\s), \; x \in \mathrm{Im} \; v^\ell \; \text{for all} \; \ell \geq 0, x \not \in \mathrm{Im}\; q^2\}-2),\]
and
\[\overline{\delta}_G(Y,\s)=\frac{1}{2}( \min \{ m \equiv 2\mu(Y,\s)+1 \bmod{4} \mid \exists x \in \swfhg_m(Y,\s), \; x \in \mathrm{Im} \; v^\ell \; \text{for all} \; \ell \geq 0, x \not \in \mathrm{Im}\; q^2\}-1).\]
Then 
\[ \undd(Y,\s)= \underline{\delta}_G(Y,\s) \; \text{or} \; \underline{\delta}_G(Y,\s)+1\] and
\[ \ovrr(Y,\s)= \overline{\delta}_G(Y,\s) \; \text{or} \; \overline{\delta}_G(Y,\s)+1.\]
\end{thm}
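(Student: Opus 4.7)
The proof proceeds via the Gysin sequence associated to the principal $S^1$-bundle
\[
\pi: EG \times_{\mathbb{Z}/4} \swf(Y,\s) \longrightarrow EG \times_G \swf(Y,\s),
\]
arising from the identification $G/(\mathbb{Z}/4) \cong S^1$.  A calculation with $\swf(Y,\s)=\mathrm{pt}$, comparing the rings $H^*(BG) = \f[q,v]/(q^3)$ and $H^*(B\mathbb{Z}/4) = \f[U,Q]/(Q^2)$, identifies the Euler class as $q^2 \in H^2(BG;\f)$, while the restriction of rings sends $q \mapsto Q$ and $v \mapsto U^2$.  The corresponding long exact sequence in Borel homology is
\[
\cdots \to \swfhg_{m}(Y,\s) \xrightarrow{\cap q^2} \swfhg_{m-2}(Y,\s) \xrightarrow{\tau} \swfz_{m-1}(Y,\s) \xrightarrow{\pi_*} \swfhg_{m-1}(Y,\s) \xrightarrow{\cap q^2} \swfhg_{m-3}(Y,\s) \to \cdots,
\]
where $\pi_*$ is the pushforward and $\tau$ is the transfer (raising degree by $1$), and both maps intertwine the ring actions through $q \leftrightarrow Q$ and $v \leftrightarrow U^2$.

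My next step would be to read off from this sequence how a minimum-degree $\mathbb{Z}/4$-tower class in $\swfz_m(Y,\s)$ satisfying the defining conditions of $\undd$ (or $\ovrr$) corresponds to a class in $\swfhg$.  There are two structural possibilities.  First, if such a class $x$ has $\pi_*(x)\neq 0$, then $\pi_*(x) \in \swfhg_m(Y,\s)$ inherits a $G$-side analogue of the conditions (infinite $v$-divisibility, and non-vanishing under the relevant $q$-projections), which matches the definition of $\undd_G$ exactly, giving $\undd(Y,\s) = \undd_G(Y,\s)$.  Second, if $\pi_*(x)=0$, then by exactness $x = \tau(y)$ for some $y \in \swfhg_{m-1}(Y,\s)$; this $y$ satisfies the $G$-side conditions with its degree shifted down by one, and a check of the residue classes modulo $4$ then gives $\undd(Y,\s) = \undd_G(Y,\s)+1$.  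The argument for $\ovrr$ is structurally identical, with only the parity of the degree residue modulo $4$ changing.

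The main obstacle is to rule out any larger discrepancy between $\undd$ and $\undd_G$ (and between $\ovrr$ and $\ovrr_G$).  This amounts to showing that a hypothetical $\mathbb{Z}/4$-tower class two or more degrees away from the direct lift of a $G$-tower generator must be annihilated under the Gysin sequence, because its putative $G$-side representative would lie in $\mathrm{Im}\,q^2$, contradicting the defining condition of $\undd_G$.  The crucial structural input here is the relation $q^3 = 0$ in $\f[q,v]/(q^3)$, which prevents iterated $q$-shifts from producing arbitrarily large degree discrepancies.  Coupling this with a careful residue-modulo-$4$ bookkeeping --- carried out separately for $\undd$ versus $\undd_G$ and for $\ovrr$ versus $\ovrr_G$ --- confines the ambiguity to $\{0,1\}$ and completes the proof.
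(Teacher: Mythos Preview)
Your Gysin setup is correct and is exactly what the paper uses: the type $\mathbb{Z}/4 \to G \to S^1$ sequence with Euler class $q^2$ and restriction $q\mapsto Q$, $v\mapsto U^2$. The gap is in how you extract the conclusion from it.

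Your case split on whether $\pi_*(x)$ vanishes does not yield the two values $\undd_G$ and $\undd_G+1$. In Case~1 you claim $\pi_*(x)$ ``matches the definition of $\undd_G$ exactly,'' but $\pi_*$ preserves degree: if $x\in\swfz_m$ with $m\equiv 2\mu+1\bmod 2$, then $\pi_*(x)$ lies in degree $m\equiv 2\mu+1$ or $2\mu+3\bmod 4$, never in the residue class $2\mu+2\bmod 4$ required by $\undd_G$. So $\pi_*(x)$ is never a witness for $\undd_G$. In Case~2 the same residue bookkeeping only works half the time, and in neither case do you get an \emph{equality}: producing a $G$-class from a $\mathbb{Z}/4$-class gives only one inequality. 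The appeal to $q^3=0$ in your last paragraph does not supply the missing direction.

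The paper instead proves, in cohomology, the clean biconditional
\[
\underline{\delta}(X)\leq 2N+\mu(X)+1 \iff \underline{\delta}_G(X)\leq 2N+\mu(X),
\]
using only the map $\iota^*\colon \tilde H^*_{\mathbb{Z}/4}(X)\to \tilde H^{*-1}_G(X)$ (your $\tau$ is its homological dual) together with exactness $\mathrm{Ker}(q^2)=\mathrm{Im}(\iota^*)$. The degree shift of $\iota^*$ by $-1$ is precisely what carries the residue $2\mu+3\bmod 4$ on the $\mathbb{Z}/4$ side to $2\mu+2\bmod 4$ on the $G$ side, and exactness translates ``$q^2z=0$'' into ``$z$ comes from $\mathbb{Z}/4$.'' One then dualizes. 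If you want to argue directly in homology, the relevant map is $\tau$, not $\pi_*$: the identity $\mathrm{Ker}(\tau)=\mathrm{Im}(q^2)$ converts ``$z\notin\mathrm{Im}\,q^2$'' into ``$\tau(z)\neq 0$,'' and $\tau$ raises degree by $1$, which is exactly the discrepancy between the two residue conditions.
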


To interpret $\delta_G(Y,\s)$, one may think of it just as the invariant $\gamma$, but with an adjustment coming from the $\f[v]$-torsion submodule of $\swfhg_*(Y,\s)$.  Similarly, $\underline{\delta}_G(Y,\s)$ is an adjustment of $\gamma$ as well, while $\overline{\delta}_G(Y,\s)$ is an adjustment of $\beta$.  We have the following as an immediate corollary of Theorem \ref{thm:z4dvsg}.

\begin{cor}\label{cor:ineqs}
Let $(Y,\s)$ be a rational homology three-sphere with spin structure.  Then
\[ \alpha(Y,\s) \geq \overline{\delta}(Y,\s) \geq \beta(Y,\s) \geq \underline{\delta}(Y,\s) \geq \gamma(Y,\s).\]
\end{cor}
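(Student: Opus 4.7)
The plan is to deduce the corollary from Theorem \ref{thm:z4dvsg} combined with two direct comparisons at the level of $\swfhg$.

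First, directly from the definitions one sees that $\dglow(Y,\s) \geq \gamma(Y,\s)$ and $\dgup(Y,\s) \geq \beta(Y,\s)$: the defining condition for $\dglow$ (respectively $\dgup$) adds the constraint $x \notin \mathrm{Im}\, q^2$ to that for $\gamma$ (respectively $\beta$), which can only shrink the admissible set and raise the minimum. Combined with the first halves of Theorem \ref{thm:z4dvsg} (namely $\undd \geq \dglow$ and $\ovrr \geq \dgup$), this already yields two of the four inequalities in the corollary: $\undd \geq \gamma$ and $\ovrr \geq \beta$.

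For the remaining inequalities $\alpha \geq \ovrr$ and $\beta \geq \undd$, the plan is to combine the second halves of Theorem \ref{thm:z4dvsg} (namely $\undd \leq \dglow + 1$ and $\ovrr \leq \dgup + 1$) with companion bounds $\dgup + 1 \leq \alpha$ and $\dglow + 1 \leq \beta$. To obtain $\dgup \leq \alpha$, I would take a $v^\ell$-survivor $x \in \swfhg_m(Y,\s)$ realizing the minimum for $\alpha$; then $qx$ is a $v^\ell$-survivor in degree $m+1$, in the correct congruence class mod $4$ for $\dgup$, and an analysis of the $\f[q,v]/(q^3)$-module structure of the tower summand of $\swfhg_*(Y,\s)$ (a shifted copy of $H_*(B\pin;\f)$) shows $qx \notin \mathrm{Im}\, q^2$. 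This gives $\dgup \leq \alpha$, and an analogous argument using a $\beta$-minimizer gives $\dglow \leq \beta$. Since $\alpha, \beta, \dglow, \dgup$ all reduce to $\mu(Y,\s) \bmod 2$, the inequalities $\dgup \leq \alpha$ and $\dglow \leq \beta$ are either equalities or hold with a gap of at least $2$; in the latter case, Theorem \ref{thm:z4dvsg} yields $\ovrr \leq \dgup + 1 \leq \alpha$ and similarly $\undd \leq \beta$, as required.

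The principal obstacle is the tight case $\dgup = \alpha$ (respectively $\dglow = \beta$), where the parity gap provides no slack: here one must resolve the $\pm 1$ ambiguity in Theorem \ref{thm:z4dvsg} in favour of $\ovrr = \dgup$ (respectively $\undd = \dglow$), which requires appealing to the internal structure of the proof of Theorem \ref{thm:z4dvsg} and showing that the dichotomy governing this ambiguity cannot coexist with $\dgup = \alpha$ (respectively $\dglow = \beta$).
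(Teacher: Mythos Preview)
Your plan for the inequalities $\ovrr \geq \beta$ and $\undd \geq \gamma$ is correct and is exactly what the paper has in mind: the extra constraint $x \notin \mathrm{Im}\,q^2$ raises the minimum, so $\dgup \geq \beta$ and $\dglow \geq \gamma$, and Theorem~\ref{thm:z4dvsg} then gives the two inequalities. For the remaining inequalities $\ovrr \leq \alpha$ and $\undd \leq \beta$, however, your argument has two problems.

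First, a degree slip: in homology $q$ \emph{lowers} grading, so if $x$ realizes $\alpha$ in degree $m \equiv 2\mu \bmod 4$ then $qx$ lies in degree $m-1 \equiv 2\mu+3 \bmod 4$, not $m+1$. This is the wrong congruence class for $\dgup$, so your proposed witness does not exist as stated.

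Second, and more substantively, even with $\dgup \leq \alpha$ correctly in hand, Theorem~\ref{thm:z4dvsg} only yields $\ovrr \leq \dgup + 1 \leq \alpha + 1$, and your tight-case resolution is left as a promise. Both problems are avoided by bypassing $\dgup$ and using the restriction $p^* = \mathrm{res}^G_{\mathbb{Z}/4}$ from the Type~(\ref{itm:typ2}) Gysin sequence directly (this is the same machinery as in the proof of Proposition~\ref{prop:dupdownfromg}, so the paper's ``immediate'' is reasonable). The key observation is that in the localization $v^{-1}\tH^*_G(X) \cong \f[q,v,v^{-1}]/(q^3)$ the image of $q^2$ is concentrated in degrees $\equiv 2\mu+2 \bmod 4$; hence any $v$-nontorsion $y$ in degree $\equiv 2\mu$ or $2\mu+1$ has localization outside $\mathrm{Im}\,q^2$, so $y \notin \mathrm{Im}\,q^2 = \ker p^*$, and $p^*y$ is $U$-nontorsion. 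Applying this to a realizer of $a(X)$ (respectively $b(X)$) gives $\overline{d}(X) \leq a(X)$ and $\underline{d}(X) \leq b(X)$, i.e.\ $\ovrr \leq \alpha$ and $\undd \leq \beta$, with no case split.
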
 

Given that the homology cobordism invariants from $S^1$ and $\mathbb{Z}/4 \subset \mathrm{Pin}(2)$ cannot be determined from the $\mathrm{Pin}(2)$-equivariant homology, it is natural to ask if there are other subgroups of $\mathrm{Pin}(2)$ which produce new homology cobordism invariants.  We show that this is not the case.  We call a homology cobordism invariant $\delta_I$ a generalized Fr{\o}yshov invariant if it is constructed analogously to $\delta$, but perhaps using a different subgroup $H$ of $G$; for the precise definition, see Section \ref{subsec:main}.

\begin{thm}\label{thm:main}
Let $\{\delta_I\}$ be the set of generalized Fr{\o}yshov invariants associated to a subgroup $H \subset G$.  Then \[\{ \delta_I \}\subseteq \{\delta,\underline{\delta},\overline{\delta},\alpha,\beta,\gamma\},\]
where the generalized Fr{\o}yshov invariants are viewed as maps $\ttz \rightarrow \mathbb{Z}$.  
\end{thm}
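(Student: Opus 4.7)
The plan is to classify the closed subgroups $H \subseteq G = \pin$ up to $G$-conjugacy and analyze each resulting Borel cohomology module in turn. Since every element of $G \setminus S^1$ squares to $-1$ and hence has order $4$, any closed $H \subseteq G$ either lies in $S^1$---and is then conjugate to $\{1\}$, a finite cyclic $\mathbb{Z}/n$, or $S^1$ itself---or meets $G \setminus S^1$, in which case $H$ is $G$ or (up to conjugacy) the subgroup $Q_{4m} := \mathbb{Z}/(2m) \cup (\mathbb{Z}/(2m))\cdot j$ for some $m \geq 1$, with $Q_4 = \langle j\rangle \cong \mathbb{Z}/4$ and $Q_{4m}$ the generalized quaternion group of order $4m$ for $m\geq 2$. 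Conjugation by $g \in G$ yields a natural isomorphism of Borel modules $\tilde H^*_{gHg^{-1}}(\swf(Y,\s)) \cong \tilde H^*_H(\swf(Y,\s))$, so the generalized Fr{\o}yshov invariants depend only on this conjugacy class.

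For $H \subseteq S^1$ I would argue as follows. When $H$ is trivial or $\mathbb{Z}/n$ with $n$ odd, $H^*(BH;\f)$ is concentrated in degree zero, so no nontrivial ``infinite tower'' condition is imposed and any generalized Fr{\o}yshov invariant collapses to the minimum degree of a nonzero Borel class, which coincides with $\delta$ on spaces of type SWF. When $H = \mathbb{Z}/(2^k)$, I would use the type-SWF decomposition of $\swf(Y,\s)$: its $S^1$-fixed set is a sphere $S^V$ on which $j$ acts antipodally, and the complement is $S^1$-free. A long exact sequence in $H$-Borel cohomology, combined with the restriction $H^*(BS^1;\f) \to H^*(BH;\f)$ sending $U$ to the degree-$2$ polynomial generator, identifies the infinite-tower part of $\tilde H^H_*(\swf(Y,\s))$ with that of $\tilde H^{S^1}_*(\swf(Y,\s))$; every generalized Fr{\o}yshov invariant from $H$ is then forced to equal $\delta$.

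For $H$ meeting $G \setminus S^1$, the cases $H = G$ and $H = \mathbb{Z}/4 = \langle j \rangle$ are precisely those giving the invariants $\alpha,\beta,\gamma$ and $\undd,\ovrr$ (and $\delta$ is also recovered from $\swfz$ by Theorem~\ref{thm:dvsz4d}). For $H = Q_{4m}$ with $m \geq 2$ the key observation is that the $H$-action on the $S^1$-fixed sphere $S^V$ factors through the quotient $H/(H \cap S^1) = \mathbb{Z}/2$, with nontrivial element acting antipodally. The Serre spectral sequence of the fibration $S^V \to EH \times_H S^V \to BH$ with antipodal monodromy then identifies the infinite-tower part of $\tilde H^H_*(\swf(Y,\s))$ with $H^*(BH;\f)$-multiples of the fundamental class of $S^V$; restriction along $\langle j \rangle \hookrightarrow H$ and $H \hookrightarrow G$ shows that the minimum-degree conditions defining a generalized Fr{\o}yshov invariant can be rewritten in terms of the $\f[U,Q]/(Q^2)$- or $\f[q,v]/(q^3)$-module structures used to define $\undd,\ovrr$ and $\alpha,\beta,\gamma$.

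The main obstacle will be the $Q_{4m}$ case with $m \geq 2$: the ring $H^*(BQ_{4m};\f)$ contains polynomial generators beyond those pulled back from $H^*(BG;\f)$ or $H^*(B\langle j \rangle;\f)$, and one must verify that these extra generators cannot detect a genuinely new minimum-degree condition on the tower. The expected mechanism is that on the tower---which comes from the antipodally-acted fixed sphere $S^V$---these extra generators act through their images in $H^*(B\mathbb{Z}/2;\f)$, whose towers are already captured by the $U$- and $v$-actions used to define the invariants in the $S^1$ and $G$ cases, so that no new homology cobordism invariants arise.
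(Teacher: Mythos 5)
Your plan correctly classifies the closed subgroups of $G$ up to conjugacy and correctly reduces the cyclic case to $\delta$, but it stalls exactly where the paper's argument does its real work: the generalized quaternion case $H = Q_{4m}$, $m\geq 2$. The gap you flag in your last paragraph as ``the main obstacle'' is not merely a technical loose end --- it is the heart of the proof, and your proposed mechanism does not close it.

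The missing structural input is a parity dichotomy for $H^*(BQ_{4m};\f)$. When $m$ is \emph{odd}, the abelianization of $Q_{4m}$ is $\mathbb{Z}/4$ and in fact $H^*(BQ_{4m};\f)\cong H^*(B\mathbb{Z}/4;\f)=\f[U,Q]/(Q^2)$, so the generalized Fr{\o}yshov invariants collapse (by the argument of Lemma~\ref{lem:z2s1}) to $\undd$ and $\ovrr$. When $m$ is \emph{even}, the abelianization is $(\mathbb{Z}/2)^2$, so $H^1(BQ_{4m};\f)=\f^2$, which forces the Gysin sequence of $S^1\to BQ_{4m}\to BG$ to split as $H^*(BQ_{4m})\cong H^*(BG)\oplus H^*(BG)[-1]$ over $H^*(BG)$. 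Your sketch treats all $m\geq 2$ uniformly and therefore cannot see that the two parities yield genuinely different answers ($\undd,\ovrr$ versus $\alpha,\beta,\gamma$): any argument that doesn't branch here cannot be right.

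Beyond the missing dichotomy, your proposed mechanism --- that the ``extra'' generators of $H^*(BQ_{4m};\f)$ should act through their images in $H^*(B\mathbb{Z}/2;\f)$ because the $H$-action on the $S^1$-fixed sphere factors through $H/(H\cap S^1)\cong \mathbb{Z}/2$ --- is not what closes the argument. The Serre spectral sequence / localization step you describe only identifies the nontorsion part of $\tq_H(X)$ with $H^{*-s}(BH)$, which is already contained in the paper's Theorem~\ref{thm:equivlocalzn}; it does not by itself compare $\delta_{H,e}$ across different choices of $e$. What is actually needed, after the Gysin splitting in the $m$-even case, is the monotonicity relation of equation~(\ref{eq:froyorder}) ($\delta_{H,f}\geq \delta_{H,e}$ when $f$ divides $e$) together with the explicit list of homogeneous elements of $H^*(BQ_{4m})/(v)$, so that the inequalities (\ref{eq:badgineqs}) and (\ref{eq:badgineqs2}) squeeze every $\delta_{H,e}$ onto $\alpha$, $\beta$, or $\gamma$. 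Your proposal contains neither the monotonicity relation nor the squeeze, so the $Q_{4m}$ case is left genuinely open.

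A smaller point: within $S^1$ you only discuss $\mathbb{Z}/n$ with $n$ odd and $n=2^k$; the intermediate even $n$ (e.g.\ $n\equiv 2\bmod 4$ with $n>2$) are not explicitly covered, and for $4\mid n$ the ring $H^*(B\mathbb{Z}/n;\f)=\f[U,Q]/(Q^2)$ has two homogeneous classes mod $U$, so there are a priori two generalized Fr{\o}yshov invariants to compare to $\delta$, not one. This is routine, but it should be addressed rather than folded silently into the $2^k$ case.
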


The proof of Theorem \ref{thm:main} also gives the next corollary.  We note that the closed subgroups of $G$ are precisely $\mathbb{Z}/2=\langle j^2 \rangle$, $\mathbb{Z}/4=\langle j \rangle$, $S^1$, cyclic subgroups of $S^1$, and generalized quaternion groups $Q_{4m}=\langle e^{\pi i/m},j \rangle$.  

\begin{cor}\label{cor:maincor}
Let $H=Q_{4m}\subset G$ for $m$ even and $(Y,\s)$ any rational homology three-sphere with spin structure.  Then the isomorphism type of $\mathit{SWFH}^G_*(Y,\s)$ (as a $H^*(BG)$-module) is specified by the isomorphism type of $\mathit{SWFH}^H_*(Y,\s)$ as a $H^*(BH)$-module.  
\end{cor}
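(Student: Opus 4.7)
The approach rests on the structure theorem for $\swf(Y,\s)$ established in proving Theorem~\ref{thm:main}: Manolescu's construction produces a $G$-spectrum whose only isotropy groups are $S^1$ and $G$ itself. For any closed subgroup $H \subseteq G$, this constrains the Borel homology $\mathit{SWFH}^H_*(Y,\s)$ to decompose, up to a single ``tower'' summand that records the image of $v^\infty$, as a direct sum of standard $H^*(BH)$-modules indexed by a finite combinatorial datum $\mathcal{D}(Y,\s)$ that is intrinsic to the $G$-equivariant homotopy type of $\swf(Y,\s)$. In particular the same datum $\mathcal{D}$ controls both $\swfhg_*(Y,\s)$ and $\mathit{SWFH}^H_*(Y,\s)$, so the content of the corollary is the injectivity of the assignment $\mathcal{D} \mapsto \mathit{SWFH}^H_*$ on isomorphism classes of $H^*(BH)$-modules: this would allow one to recover $\mathcal{D}$, and hence $\swfhg_*(Y,\s)$, from $\mathit{SWFH}^H_*(Y,\s)$.

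To establish injectivity for $H = Q_{4m}$ with $m$ even, I would first compute the restriction map $H^*(BG) = \f[q,v]/(q^3) \to H^*(BQ_{4m})$ and verify that the images of $q$ and $v$ remain algebraically independent in a suitable range, so that shifted towers of different degrees remain non-isomorphic after restriction. The Serre spectral sequence of the bundle $G/Q_{4m} \simeq S^1 \to BQ_{4m} \to BG$ provides convenient bookkeeping: the fiber is one-dimensional, so only a single potentially non-trivial differential $d_2$ can occur, and it is an $H^*(BG)$-module homomorphism determined by the decomposition data. A case analysis showing that this differential, together with the $H^*(BH)$-action, distinguishes every pair of distinct standard summands would complete the proof.

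The main obstacle is precisely this verification, and it is where the parity hypothesis enters. For $m$ odd, the mod-$2$ cohomology $H^*(BQ_{4m};\f)$ has a more degenerate form in which the restriction of $v$ becomes a decomposable element in low degree, collapsing would-be invariants and creating coincidences among shifted standard modules. For $m$ even, the subgroup $Q_{4m}$ contains the quaternion group $Q_8$, and restriction through $Q_8$ preserves enough of the cohomological complexity to distinguish the relevant summands. Once Theorem~\ref{thm:main} and its proof are in hand, I expect this final check to reduce to a direct computation in the cohomology rings of the subgroups at hand, together with a matching of standard $H^*(BG)$- and $H^*(BH)$-modules summand by summand.
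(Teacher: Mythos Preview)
Your proposal rests on two claims that are not supported by the paper and would not lead to a proof as stated.

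First, there is no ``structure theorem'' in the proof of Theorem~\ref{thm:mainunder} (or anywhere in the paper) decomposing $\tilde{H}^*_H(X)$ into ``standard summands'' indexed by a finite combinatorial datum $\mathcal{D}$. Example~\ref{ex:weirdgkoszulexamples} already exhibits $\tilde{H}^*_G(X)$ with a fairly intricate $q$-action that is not a direct sum of shifted towers and torsion pieces in any canonical way, so the injectivity-of-$\mathcal{D}$ strategy has no object to be applied to.

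Second, and more seriously, you have the role of the Serre/Gysin differential backwards. For $H=Q_{4m}$ with $m$ even the decisive fact, proved in the course of Theorem~\ref{thm:mainunder}, is that $H^1(BQ_{4m})=\f^2$ while $H^1(BG)=\f$; exactness of the Gysin sequence for $S^1\to BQ_{4m}\to BG$ then forces the Euler class $e(G,Q_{4m})$ to vanish, so the differential $d_2$ is \emph{zero}, and one obtains the splitting $H^*(BQ_{4m})\cong H^*(BG)\oplus H^*(BG)[-1]$ of~(\ref{eq:gysingsplitting}). By naturality of the Euler class (Fact~\ref{fct:eulerclasses}) the same vanishing holds for the bundle over $EG\times_G X$, so for every $X\in\E$ the Gysin triangle collapses to a short exact sequence
\[
0 \longrightarrow \tilde{H}^*_G(X) \xrightarrow{\ p^*\ } \tilde{H}^*_H(X) \longrightarrow \tilde{H}^{*-1}_G(X) \longrightarrow 0
\]
of $H^*(BG)$-modules. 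This is the mechanism by which $\tilde{H}^*_G(X)$ is recovered from $\tilde{H}^*_H(X)$: the $G$-equivariant cohomology sits inside the $H$-equivariant cohomology as the image of restriction, and the extra $H^*(BH)$-module structure (the action of the generator $r$ of the complementary summand in~(\ref{eq:gysingsplitting})) pins down that image. Your plan to extract information from a \emph{nontrivial} $d_2$ is therefore aimed at the wrong target; for $m$ even the differential carries no information precisely because it is identically zero, and it is this vanishing---not any case analysis of summands---that makes the corollary go through. (For $m$ odd the Euler class is $q^2$, the Gysin sequence does not split, and indeed $H^*(BQ_{4m})\cong H^*(B\mathbb{Z}/4)$, which is why that case behaves like $\mathbb{Z}/4$ rather than like $G$.)
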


\subsection*{Organization}
In Section \ref{sec:2} we recall what we will need from equivariant topology.  In Section \ref{sec:gysin} we define Gysin sequences and then use these to establish Propositions \ref{prop:z2z4}-\ref{prop:dupdownfromg}, which form the equivariant topology input for Theorem \ref{thm:dvsz4d}-\ref{thm:z4dvsg}.  In Subsection \ref{subsec:gauge} we state the existence of Manolescu's Seiberg-Witten Floer stable homotopy type, $\swf(Y,\s)$, and show Theorems \ref{thm:dvsz4d}-\ref{thm:z4dvsg} of the Introduction.  In Subsection \ref{subsec:main}, we prove Theorem \ref{thm:mainunder}, which is the equivariant topology input for Theorem \ref{thm:main}.  
\section*{Acknowledgments} 
I wish to thank Ciprian Manolescu and Francesco Lin for helpful conversations.  
\section{Spaces of type SWF}\label{sec:2}
 
\subsection{$G$-CW Complexes}
	In this section we recall the definition of spaces of type SWF from \cite{ManolescuPin}, as well as briefly review the basics of equivariant topology, referring to Section 2 of \cite{ManolescuPin} for further details.  Spaces of type SWF are the output of the construction of the Seiberg-Witten Floer stable homotopy type of \cite{ManolescuPin} and \cite{ManolescuK}; see Section \ref{subsec:gauge}.  Throughout, all homology will be taken with $\f=\mathbb{Z}/2$-coefficients.  

Let $K$ be a compact Lie group.  A (finite) $K$-CW decomposition of a space $(X,A)$ with $K$-action is a filtration $(X_n\mid n \in \mathbb{Z}_{\geq 0})$ of $X$ such that 
	\begin{itemize}
	\item $A \subset X_0$ and $X=X_n$ for $n$ sufficiently large.
	\item The space $X_n$ is obtained from $X_{n-1}$ by attaching $K$-equivariant $n$-cells, copies of $(K/H) \times D^n$, for $H$ a closed subgroup of $K$.  
	\end{itemize}
When $A$ is a point, we call $(X,A)$ a pointed $K$-CW complex.  

Let $X$ and $Y$ pointed $K$-CW complexes, at least one of which is a finite complex.  We define the smash product $X \wedge Y$ as a $K$-space by letting $K$ act diagonally.  In the case that $X=V^+$ is the one-point compactification of a finite-dimensional $K$-representation $V$, we call $\Sigma^VY=V^+\wedge Y$ the suspension of $Y$ by $V$.  
Define also \[X\wedge_K Y =(X\wedge Y)/K.\]

Let $EK$ denote a contractible space with free $K$-action, and let $EK_+$ denote $EK$ with a disjoint base point added.  The reduced Borel homology and cohomology of $X$ are defined by: 
\begin{align}\label{eq:boreldef}
\tilde{H}^K_*(X)&= \tilde{H}_*(EK_+ \wedge_K X),\\ \nonumber
\tilde{H}^*_K(X)&= \tilde{H}^*(EK_+\wedge_K X).
\end{align}
Borel homology and cohomology are invariants of $K$-equivariant homotopy equivalence.

Furthermore, there is a projection:
\[ \begin{tikzcd} EK_+\wedge_K X \arrow{r}{\pi_{1}} & EK_+/K=BK_+,\end{tikzcd}\]
which induces a map $\pi_1^*:H^*(BK)\rightarrow \tq_K(X)$.  Via $\pi_1^*$, $\tq_K(X)$ and $\tilde{H}^K_*(X)$ inherit the structure of $H^*(BK)$-modules.  On $\tq_K(X)$, $H^*(BK)$ increases grading, while on $\tilde{H}^K_*(X)$, $H^*(BK)$ decreases grading.

For a subgroup $L\subseteq K$ and a $K$-CW complex $X$, we may relate the $L$-Borel cohomology and $K$-Borel cohomology of $X$.  Indeed, there is a quotient map \[\begin{tikzcd}
EK_+\wedge_L X  \arrow{r}{\pi} & EK_+ \wedge_K X \end{tikzcd}
\]
which induces a map in cohomology:
\begin{equation}\label{eq:equiquot}\begin{tikzcd}
\tq_K(X) \arrow{r}{\pi^*} & \tq_L(X).\end{tikzcd}
\end{equation}
We call the map $\pi^*$ the \emph{restriction map} from $K$ to $L$ and write $\pi^*=\mathrm{res}^K_L$.   

Setting $X$ a point in (\ref{eq:equiquot}), we have a restriction map (of algebras) $H^*(BK) \to H^*(BL)$.  Then $\tq_L(X)$ inherits an $H^*(BK)$-module structure by restriction.  We denote $\tq_L(X)$, viewed as a $H^*(BK)$-module, by $\mathrm{res}^K_L\tq_L(X)$.  Then the map 
\begin{equation}\label{eq:equiquot2}\begin{tikzcd}\tq_K(X) \arrow{r}{\mathrm{res}^K_L} & \mathrm{res}^K_L\tq_L(X) \end{tikzcd}\end{equation}
is a map of $H^*(BK)$-modules.    

Let $G=\mathrm{Pin}(2)$ and $BG$ its classifying space.  In addition to the definition of $G$ from the Introduction, one may think of $G$ as the set $S^1 \cup jS^1 \subset \mathbb{H}$, where $S^1$ is the unit circle in the $\langle 1, i \rangle$ plane, with group action on $G$ induced from the group action of the unit quaternions.  Thus $S^\infty=S(\mathbb{H}^\infty)$ with its quaternion action is a free $G$-space.  Since $S^\infty$ is contractible, we identify $EG=S^\infty$.  

Manolescu shows in \cite{ManolescuPin} that \begin{equation}\label{eq:bghomcalc}H^*(BG)=\f[q,v]/(q^3),\end{equation} where $\mathrm{deg} \; q =1$ and $\mathrm{deg} \; v =4$.  

For convenience, we also record 
\begin{align}\label{eq:classcohcalc}H^*(B\mathbb{Z}/2)&=\f[W],\\ \nonumber H^*(B\mathbb{Z}/4)&=\f[U,Q]/(Q^2=0),\\ \nonumber H^*(BS^1)=H^*(\mathbb{C}P^\infty)&=\f[U],\end{align} where $\mathrm{deg}\, U=2$ and $\mathrm{deg}\, W=\mathrm{deg}\, Q=1$. 

For a subset $S$ of a group $K$, let $\langle S \rangle$ denote the subgroup generated by $S$.  There are inclusions $\mathbb{Z}/2=\langle j^2 \rangle \subset S^1 \subset G$, and $\mathbb{Z}/4 \cong \langle j \rangle \subset G$.  We will describe the corresponding restriction maps in Proposition \ref{prop:listofgys}. 

We will also need to use that Borel cohomology behaves well with respect to suspension.
\begin{prop}[\cite{ManolescuPin} Proposition 2.2]\label{prop:susp}
Let $V$ a finite-dimensional representation of a compact Lie group $K$.  Then, as $H^*(BK)$-modules:
\begin{align}\label{eq:suspensioninvar}
\tilde{H}^*_K(\Sigma^VX)\cong \tilde{H}^{*-\mathrm{dim}\, V}_K(X)\\ \nonumber
\tilde{H}_*^K(\Sigma^VX)\cong \tilde{H}_{*-\mathrm{dim}\, V}^K(X) 
\end{align} 
\end{prop}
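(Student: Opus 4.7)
The plan is to identify the Borel construction of a representation suspension with the Thom space of an associated vector bundle, and then apply the Thom isomorphism.

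Associated to the $K$-representation $V$ is the vector bundle $\eta_V := EK \times_K V \to BK$ of rank $d := \dim V$. Writing $p: X_{hK} := EK_+ \wedge_K X \to BK_+$ for the projection obtained by collapsing $X$ to a point, I set $\xi := p^*\eta_V \to X_{hK}$. The first step is to establish the identification
\[ EK_+ \wedge_K (V^+ \wedge X) \;\cong\; \mathrm{Th}(\xi), \]
which is the assertion that fiberwise one-point compactification commutes with the Borel construction. I would verify this by induction on the $K$-CW skeleta of $X$: on each equivariant cell $(K/H)\times D^n$, the left-hand side produces a $V^+$-fibration over $EK\times_K((K/H)\times D^n)\simeq BH\times D^n$, which is a local model for $\mathrm{Th}(\xi)$. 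Some care is needed with basepoints because $X$ is pointed, but this is formal.

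Given the identification, the $\mathbb{F}$-Thom isomorphism (valid for every real vector bundle, since $\mathbb{F}_2$-orientability is automatic) yields
\[ \tilde{H}^*(\mathrm{Th}(\xi); \mathbb{F}) \;\cong\; H^{*-d}(X_{hK}; \mathbb{F}) \;=\; \tilde{H}^{*-d}_K(X), \]
given by cup product with the Thom class $u_\xi \in \tilde{H}^d(\mathrm{Th}(\xi);\mathbb{F})$. The $H^*(BK)$-module structure is preserved because both module structures arise by pullback along $p$, and cupping with $u_\xi$ is $H^*(X_{hK})$-linear (in particular $H^*(BK)$-linear). The homology statement follows by dualizing over the field $\mathbb{F}$, or equivalently by invoking the Thom isomorphism in homology (capping with $u_\xi$).

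The only real obstacle is the identification in the first step; once that is in place, the Thom isomorphism and its interaction with the $H^*(BK)$-action are routine. I would also note that the construction of $\eta_V$ is natural in maps of $K$-spaces, so the isomorphisms so produced are functorial in $X$, and the argument is entirely characteristic-independent save for the appeal to $\mathbb{F}_2$-orientability.
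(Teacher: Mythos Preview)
The paper does not supply a proof of this proposition; it is simply quoted from \cite{ManolescuPin} (Proposition~2.2) and used as a black box. Your argument via the Thom isomorphism is the standard one and is correct: the identification $EK_+ \wedge_K (V^+ \wedge X) \cong \mathrm{Th}(p^*\eta_V)$ is a routine check (fiberwise one-point compactification commutes with the Borel construction), and over $\mathbb{F}=\mathbb{Z}/2$ every real bundle is orientable, so the Thom class exists and cupping with it is $H^*(BK)$-linear. There is nothing to compare against in this paper, and nothing to correct in your sketch.
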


We mention three irreducible representations of $G$:
\begin{itemize}
\item The trivial one-dimensional representation $\mathbb{R}$.
\item The one-dimensional real vector space on which $j\in G$ acts by $-1$, and on which $S^1$ acts trivially, denoted $\tilde{\mathbb{R}}$.
\item The quaternionic representation $\mathbb{H}$, where $G$ acts by left multiplication.
\end{itemize}
\begin{defn}\label{thm:swfdefn}
Let $s \in \mathbb{Z}$.  A \emph{space of type SWF} at level $s$ is a pointed, finite $G$-CW complex $X$ with
\begin{itemize}
\item The $S^1$-fixed-point set $X^{S^1}$ is $G$-homotopy equivalent to $(\tilde{\mathbb{R}}^s)^+$, the one-point compactification of $\tilde{\mathbb{R}}^s$.
\item The action of $G$ on $X-X^{S^1}$ is free.  
\end{itemize}
We define $\mu(X)\in \mathbb{Q}/2\mathbb{Z}$ by $\mu(X)=\frac{s}{2} \bmod{2}$.   
\end{defn}	

We will often have occasion later to work with $2\mu(X)$, which we view as an element of $\mathbb{Q}/4\mathbb{Z}$.  

We note that for a space $X$ of type SWF, \[\tilde{H}^G_*(X^{S^1})=\tilde{H}^G_*((\tilde{\mathbb{R}}^s)^+)=\tilde{H}^G_{*-s}(S^0)=H_{*-s}(BG),\] and
\[\tilde{H}^*_G(X^{S^1})=H^{*-s}(BG),\]
using Proposition \ref{prop:susp}.

Associated to a space $X$ of type SWF at level $s$, we take the Borel cohomology $\tilde{H}_G^*(X)$, from which Manolescu \cite{ManolescuPin} defines $a(X),b(X),$ and $c(X)$:
\begin{align}\label{eq:adef} 
a(X) &= \mathrm{min}\{ r \equiv 2\mu(X) \;\mathrm{mod}\; 4 \mid \exists \, x \in \tilde{H}^r_G(X), v^lx \neq 0 \; \mathrm{ for\; all} \; l \geq 0 \}, \\ \nonumber
b(X) &= \mathrm{min}\{ r \equiv 2\mu(X)+1 \;\mathrm{mod}\; 4 \mid \exists \, x \in \tilde{H}^r_G(X), v^lx \neq 0 \; \mathrm{for\; all}\; l \geq 0 \}-1,  
\\ \nonumber
c(X) &= \mathrm{min}\{ r \equiv 2\mu(X)+2 \;\mathrm{mod}\; 4 \mid \exists \, x \in \tilde{H}^r_G(X), v^lx \neq 0 \;\mathrm{for\; all}\; l \geq 0 \}-2. 
\end{align}
Using $S^1$-Borel cohomology, Manolescu \cite{ManolescuPin} also defines
\begin{equation}\label{eq:ddef}
d(X) = \mathrm{min}\{ r  \mid \exists \, x \in \tilde{H}^r_{S^1}(X), U^lx \neq 0 \; \mathrm{ for\; all} \; l \geq 0 \}.
\end{equation}
The well-definedness of $a,b, c$, and $d$ follows from the Equivariant Localization Theorem.  We list a version of this theorem for spaces of type SWF:

\begin{thm}[\cite{tomdieck} III (3.8)]\label{thm:equivlocalzn}
Let $X$ be a space of type SWF.  Then the inclusion $X^{S^1}\rightarrow X$, after inverting $v$, induces an isomorphism of $\f[q,v,v^{-1}]/(q^3)$-modules:
\[v^{-1}\tilde{H}^*_G(X^{S^1})\cong v^{-1}\tilde{H}^*_G(X).\]
Furthermore,
\begin{align}\nonumber U^{-1}\tq_{S^1}(X^{S^1}) &\cong U^{-1}\tq_{S^1}(X),\\ \nonumber U^{-1}\tq_{\mathbb{Z}/4}(X^{S^1})&\cong U^{-1} \tq_{\mathbb{Z}/4}(X),\\\nonumber W^{-1}\tq_{\mathbb{Z}/2}(X^{S^1})&\cong W^{-1} \tq_{\mathbb{Z}/2}(X).\end{align}
\end{thm}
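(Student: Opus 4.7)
The plan is to reduce all four isomorphisms to the standard equivariant localization machinery of tom Dieck by verifying the freeness hypotheses it requires. The crucial input from the definition of a space of type SWF is that $G$ acts freely on $X \setminus X^{S^1}$; consequently every closed subgroup $H \subseteq G$, in particular $S^1$, $\mathbb{Z}/4 = \langle j \rangle$, and $\mathbb{Z}/2 = \langle j^2 \rangle$, also acts freely on $X \setminus X^{S^1}$, even though its $X$-fixed set may strictly contain $X^{S^1}$. For each statement the argument has the same shape: apply reduced Borel cohomology to the cofiber sequence $X^{S^1} \hookrightarrow X \to X/X^{S^1}$, use exactness of localization to reduce to showing that the cofiber's Borel cohomology is torsion with respect to the relevant class, and deduce torsion from a dimension count on the quotient.

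For the $G$-equivariant statement, it suffices to show $v^{-1}\tilde{H}^*_G(X/X^{S^1}) = 0$. Setting $Y = X/X^{S^1}$, the $G$-action on $Y$ is free away from the basepoint, so the natural map $EG_+ \wedge_G Y \to Y/G$ has contractible fibers ($\simeq EG$) over non-basepoint points and is therefore a weak equivalence of pointed spaces. Consequently
\[\tilde{H}^*_G(Y) \cong \tilde{H}^*(Y/G),\]
and the right-hand side is concentrated in finitely many degrees, since $Y/G$ is a finite CW complex. Under this identification the $H^*(BG)$-action on $\tilde{H}^*_G(Y)$ is given by cup product with $\pi_1^*(v) \in H^4(Y/G)$, where $\pi_1$ is (up to homotopy) the classifying map of the principal bundle $Y \setminus \{\ast\} \to Y/G \setminus \{\ast\}$. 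Since this cup product raises degree by $4$, some power of $v$ annihilates $\tilde{H}^*_G(Y)$, yielding the desired $v$-torsion, and the long exact sequence together with exactness of localization then gives the claimed isomorphism of $\mathbb{F}[q,v,v^{-1}]/(q^3)$-modules.

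The remaining three isomorphisms follow verbatim with $H$ and the localizing class replaced by $(S^1, U)$, $(\mathbb{Z}/4, U)$, and $(\mathbb{Z}/2, W)$ respectively: in each case freeness of the $H$-action on $X \setminus X^{S^1}$ gives $EH_+ \wedge_H (X/X^{S^1}) \simeq (X/X^{S^1})/H$, reducing $H$-Borel cohomology of the cofiber to ordinary cohomology of a finite CW complex, and the positive degree of the localizing class then forces nilpotency. The main conceptual step throughout is the identification of the Borel construction with the ordinary quotient; once that is in hand the remainder is simply a degree count. The one subtlety worth flagging is that for $H \in \{\mathbb{Z}/2, \mathbb{Z}/4\}$ the fixed set $X^H$ may be strictly larger than $X^{S^1}$, but this causes no trouble because the cofiber is formed using $X^{S^1}$, and freeness of the $G$-action on the complement, inherited by every subgroup, is all that the argument uses.
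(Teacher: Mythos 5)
The paper does not give a proof of this statement at all; it simply cites tom Dieck \cite{tomdieck}, III (3.8), as the source. What you have done is supply the standard localization argument specialized to spaces of type SWF, and it is essentially correct: reduce along the cofiber sequence $X^{S^1}\hookrightarrow X\to X/X^{S^1}$, identify the Borel construction of the free $G$-space $X/X^{S^1}$ with the ordinary quotient $(X/X^{S^1})/G$ (a finite complex), and conclude that the Euler class acts nilpotently there, so the cofiber dies after localization. The same template handles $S^1$, $\mathbb{Z}/4$, and $\mathbb{Z}/2$ because any subgroup of $G$ inherits freeness on $X\setminus X^{S^1}$. This is exactly the content of tom Dieck's localization theorem, and your verification of the freeness hypothesis from Definition \ref{thm:swfdefn} is the right thing to check.

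One small inaccuracy in your final remark: you say that for $H\in\{\mathbb{Z}/2,\mathbb{Z}/4\}$ the fixed set $X^H$ \emph{may be strictly larger} than $X^{S^1}$. In fact the opposite containment holds. Since $G$ acts freely on $X\setminus X^{S^1}$, every point outside $X^{S^1}$ has trivial stabilizer, so no nontrivial subgroup fixes it; hence $X^H\subseteq X^{S^1}$ for any nontrivial $H\subseteq G$. (Concretely, $X^{\mathbb{Z}/2}=X^{S^1}$ because $j^2=-1$ acts trivially on $\tilde{\mathbb{R}}^s$, while $X^{\mathbb{Z}/4}\simeq S^0\subsetneq X^{S^1}$ when $s>0$, since $j$ acts by $-1$ on $\tilde{\mathbb{R}}^s$.) You correctly note that this does not matter for the argument --- the cofiber is taken with respect to $X^{S^1}$ and freeness on the complement is all that is used --- so the slip is harmless, but the direction of containment is worth getting right, especially since it is precisely why localizing $\tq_H(X)$ at $X^{S^1}$, rather than at the possibly larger-looking $X^H$, is even a sensible thing to assert.
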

For $X$ a space of type SWF, $X$ is a finite $G$-complex and so we have that $\tilde{H}^*_G(X)$ is finitely generated as an $\f[v]$-module.  In particular, the $\f[v]$-torsion part of $\tilde{H}^*_G(X)$ is bounded above in grading.  Similarly, the $\f[U]$-torsion parts of $\tq_{S^1}(X)$ and $\tq_{\mathbb{Z}/4}(X)$, as well as the $\f[W]$-torsion of $\tq_{\mathbb{Z}/2}(X)$, are bounded above in grading.  

Following (\ref{eq:adef}), we define analogues of $a,b$ and $c$ for $\mathbb{Z}/4$.  

\begin{defn}\label{def:prenewdinvars}
For $X$ a space of type SWF, we define $\overline{d}(X)$ and $\underline{d}(X)$ by
\begin{align}
\overline{d}(X) &= \mathrm{min}\{ r \equiv 2\mu(X) \;\mathrm{mod}\; 2 \mid \exists \, x \in \tilde{H}^r_{\mathbb{Z}/2}(X), U^lx \neq 0 \; \mathrm{ for\; all} \; l \geq 0 \}, \\ \nonumber
\underline{d}(X) &= \mathrm{min}\{ r \equiv 2\mu(X)+1 \;\mathrm{mod}\; 2 \mid \exists \, x \in \tilde{H}^r_{\mathbb{Z}/2}(X), U^lx \neq 0 \; \mathrm{for\; all}\; l \geq 0 \}-1,
\end{align}
The well-definedness of $\overline{d}(X)$ and $\underline{d}(X)$ follows from Theorem \ref{thm:equivlocalzn}.  
\end{defn} 

\subsection{Stable $G$-Equivariant Topology}
Here we define stable equivalence for $G$-spaces and define the Manolescu invariants $\alpha, \beta$ and $\gamma$, as well as their $\mathbb{Z}/4$-analogues.
\begin{defn}[see \cite{ManolescuK}]
Let $X$ and $X'$ be spaces of type SWF, $m,m' \in \mathbb{Z}$, and $n,n' \in \mathbb{Q}$.  We say that the triples $(X,m,n)$ and $(X',m',n')$ are \emph{stably equivalent} if $n-n' \in \mathbb{Z}$ and there exists a $G$-equivariant homotopy equivalence, for some $r \gg 0$ and some nonnegative $M \in \mathbb{Z}$ and $N \in \mathbb{Q}$:
\begin{equation}\label{eq:gstabdef} \Sigma^{r\mathbb{R}} \Sigma^{(M-m)\tilde{\mathbb{R}}}\Sigma^{(N-n)\mathbb{H}}X \rightarrow \Sigma^{r\mathbb{R}} \Sigma^{(M-m')\tilde{\mathbb{R}}}\Sigma^{(N-n')\mathbb{H}}X'.\end{equation}
\end{defn} 
Let $\mathfrak{E}$ be the set of equivalence classes of triples $(X,m,n)$ for $X$ a space of type $\mathrm{SWF}$, $m\in \mathbb{Z}$, $n \in \mathbb{Q}$, under the equivalence relation of stable $G$-equivalence\footnote{This convention is slightly different from that of \cite{ManolescuK}.  The object $(X,m,n)$ in the set of stable equivalence classes $\mathfrak{E}$, as defined above, corresponds to $(X,\frac{m}{2},n)$ in the conventions of \cite{ManolescuK}.}.  The set $\mathfrak{E}$ may be considered as a subcategory of the $G$-equivariant Spanier-Whitehead category \cite{ManolescuPin}, by viewing $(X,m,n)$ as the formal desuspension of $X$ by $m$ copies of $\tilde{\mathbb{R}} $ and $n$ copies of $\mathbb{H}$.  We define Borel cohomology for $(X,m,n) \in \E$, as an isomorphism class of $H^*(BK)$-modules, by 
\begin{align}\label{eq:borelgeneral}
\tq_K((X,m,n))&=\tq_K(X)[m+4n],
\end{align}
for $K$ any closed subgroup of $G$.  The well-definedness of (\ref{eq:borelgeneral}) follows from Proposition \ref{prop:susp}.

Finally, we define the invariants $\alpha,\beta, \gamma$, $\delta$, $\underline{\delta}$ and $\overline{\delta}$ associated to an element of $\mathfrak{E}$.  
\begin{defn}\label{def:manolescudefn1}
For $[(X,m,n)] \in \E$, we set 

\begin{equation*} 
\alpha((X,m,n))=\frac{a(X)}{2}-\frac{m}{2}-2n,\;
\beta((X,m,n))=\frac{b(X)}{2}-\frac{m}{2}-2n,\;
\gamma((X,m,n))=\frac{c(X)}{2}-\frac{m}{2}-2n,
\end{equation*}
\begin{equation*}
\delta((X,m,n))=\frac{d(X)}{2}-\frac{m}{2}-2n,
\end{equation*}
\begin{equation*}
\overline{\delta}((X,m,n))=\frac{\overline{d}(X)}{2}-\frac{m}{2}-2n,\;
\underline{\delta}((X,m,n))=\frac{\underline{d}(X)}{2}-\frac{m}{2}-2n.
\end{equation*}
The invariants above do not depend on the choice of representative of the class $[(X,m,n)]\in \mathfrak{E}$.  
\end{defn} 

For notational convenience later, we also define 
\[\mu((X,m,n))=\mu(X)-\frac{m}{2}-2n \bmod{2}.\]
 
\section{Gysin Sequences}\label{sec:gysin}
\subsection{Gysin Sequences for Change-of-Groups}
Let $K$ and $L$ be compact Lie groups so that there is a fiber sequence of Lie groups (for $n=0,1,$ or $3$),
\begin{equation}\label{eq:liebund}L \subset K\rightarrow S^n.\end{equation}
Then there is a Gysin sequence, as in \cite{tomdieck}[\S III.2], given by: 
\begin{equation}\label{eq:liegys}\begin{tikzcd} \dots \arrow{r}& H^*(BK) \arrow{r}{ e(K,L)\cup -} &H^{*+n+1}(BK)\arrow{r}{p^*} &H^{*+n+1}(BL) \arrow{r}& H^{*+1}(BK)\arrow{r}&\dots
\end{tikzcd} \end{equation}
where $e(K,L)$ is the Euler class of the sphere bundle $S^n \rightarrow BL=EK/L\rightarrow BK$ and $p:BL \rightarrow BK$ is the projection (note that $p^*=\mathrm{res}^K_L$).  

\begin{prop}\label{prop:listofgys} 
We specify the Euler classes associated to the groups $\mathbb{Z}/2,\mathbb{Z}/4,S^1,$ and $G$, as follows.
\begin{enumerate}
\item \label{itm:typ1} Associated to $S^1\rightarrow G \rightarrow S^0$, we have $e(G,S^1)=q$.  Further, $p^*v=U^2$.

\item \label{itm:typ2} Associated to $\mathbb{Z}/4 \rightarrow G \rightarrow S^1,$ we have $e(G,\mathbb{Z}/4)=q^2$.  Further, $p^*q=Q$ and $p^*v=U^2$.

\item \label{itm:typ3} Associated to $ \mathbb{Z}/2=\langle j^2 \rangle \rightarrow \mathbb{Z}/4 \rightarrow \langle j \rangle/\langle j^2\rangle =\mathbb{Z}/2,$ we have $e(\mathbb{Z}/4,\mathbb{Z}/2)=Q$.  Further, $p^*Q=0$, $p^*U=W^2$.

\item \label{itm:typ4} Associated to $\mathbb{Z}/2=\langle j^2 \rangle \rightarrow S^1 \rightarrow S^1/\langle j^2 \rangle =S^1
,$ we have $e(S^1,\mathbb{Z}/2)=0$.  Further, $p^*U=W^2$.
\end{enumerate}

We call the Gysin sequences above Types (\ref{itm:typ1})-(\ref{itm:typ4}), respectively.
\end{prop}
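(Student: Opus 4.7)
The plan is to run each of the four Gysin sequences (\ref{eq:liegys}) in low degrees, using the cohomology ring computations (\ref{eq:bghomcalc})--(\ref{eq:classcohcalc}) together with the fact that $p^*$ is a ring map. In every case the Euler class $e$ lives in a one-dimensional $\mathbb{F}$-vector space, so one need only decide whether $e = 0$ or $e$ equals the unique nonzero class in its degree; once $e$ is known, the formulas for $p^*$ on generators are forced by exactness and multiplicativity.

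I would identify each Euler class by the following Poincar\'e-series dichotomy. If $e = 0$, the sequence (\ref{eq:liegys}) collapses into short exact sequences $0 \to H^{*+n+1}(BK) \to H^{*+n+1}(BL) \to H^{*+1}(BK) \to 0$, which forces $P(BL;t) = (1+t^n) P(BK;t)$. For Types (\ref{itm:typ1})--(\ref{itm:typ3}) this identity fails by direct inspection (for instance, for Type (\ref{itm:typ1}) one compares $(1+t^2)/(1-t^4)$ with $2(1+t+t^2)/(1-t^4)$, and for Type (\ref{itm:typ3}) one compares $1/(1-t)$ with $2/(1-t)$), so $e$ must be the unique nonzero class of its degree, i.e.\ $q$, $q^2$, and $Q$ respectively. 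For Type (\ref{itm:typ4}) the identity actually holds ($1/(1-t) = (1+t)/(1-t^2)$), so a direct Gysin argument is required: if $e = U$ then $e\cup : H^0(BS^1) \to H^2(BS^1)$ is an isomorphism, which by exactness forces the connecting map $\partial : H^1(B\mathbb{Z}/2) \to H^0(BS^1)$ to vanish; but $H^1(BS^1) = 0$ makes $\partial$ injective on $H^1(B\mathbb{Z}/2) \cong \mathbb{F}$, a contradiction, so $e(S^1,\mathbb{Z}/2) = 0$.

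With the Euler classes pinned down, the restriction formulas follow from a chase through one or two further degrees of each Gysin sequence. In Type (\ref{itm:typ1}), $q \cup : H^4(BG) \to H^5(BG)$ sends $v$ to $qv$ and is an isomorphism, so $p^*$ is surjective onto $H^4(BS^1)$ and $p^*v = U^2$. In Type (\ref{itm:typ2}), $q^2 \cup$ vanishes on $H^2(BG)$ since $q^3 = 0$ and $H^3(BG) = 0$, which makes $p^* : H^4(BG) \to H^4(B\mathbb{Z}/4)$ an isomorphism with $p^*v = U^2$; the same sequence near degree $1$, combined with $q^2 \cup 1 = q^2 \neq 0$, gives $p^*q = Q$. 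In Type (\ref{itm:typ3}), $Q \cup 1 = Q \neq 0$ forces $p^*Q = 0$, while $Q^2 = 0$ and $Q \cup U = QU \neq 0$ together make $p^*$ an isomorphism on $H^2$, yielding $p^*U = W^2$. In Type (\ref{itm:typ4}), the vanishing of $e$ collapses the Gysin sequence into short exact sequences from which $p^*U = W^2$ reads off immediately in degree $2$.

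The main obstacle is Type (\ref{itm:typ4}), where the Poincar\'e-series obstruction to $e = 0$ disappears and one has to argue directly with the exactness of (\ref{eq:liegys}). The remaining three cases are essentially routine once the ring structures of (\ref{eq:bghomcalc})--(\ref{eq:classcohcalc}) are at hand.
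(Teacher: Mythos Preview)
Your proof is correct. It differs from the paper's argument mainly in how you identify the Euler class in Types~(\ref{itm:typ1})--(\ref{itm:typ3}). The paper argues each case directly: for Type~(\ref{itm:typ1}), say, one simply observes that $H^1(BS^1)=0$ forces $p^*q=0$, whence $q$ lies in the image of $e(G,S^1)\cup -$ and so $e(G,S^1)=q$; the remaining cases are declared ``similar'' (e.g.\ for Type~(\ref{itm:typ2}) one notes $p^*(q^2)=(p^*q)^2\in \{0,Q^2\}=\{0\}$, and for Type~(\ref{itm:typ3}) that $p^*Q=W$ would give $0=p^*(Q^2)=W^2$). You instead compare Poincar\'e series to rule out $e=0$ in one stroke for all three types, and then chase the sequence degree-by-degree for the restriction formulas. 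Your approach is slightly more systematic and avoids having to spot, case by case, which generator lies in $\ker p^*$; the paper's approach is shorter per case and gives the restriction formula $p^*q=0$ (Type~(\ref{itm:typ1})) or $p^*Q=0$ (Type~(\ref{itm:typ3})) for free as a byproduct. Both arguments rest on the same idea---the rings in (\ref{eq:bghomcalc})--(\ref{eq:classcohcalc}) leave only one exact pattern for (\ref{eq:liegys})---and your handling of Type~(\ref{itm:typ4}), where the Poincar\'e-series test is inconclusive, is exactly the kind of direct exactness chase the paper has in mind.
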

\begin{proof}
In each case (\ref{itm:typ1})-(\ref{itm:typ4}) it is straightforward to see that the Euler class is specified by the algebraic structure of the entries of the exact sequence (\ref{eq:liegys}).  For example, we prove (\ref{itm:typ1}).  Since $H^1(BS^1)=0$, we have that $p^*q=0$.  By exactness of (\ref{eq:liegys}), $q$ is in the image of the Euler class, and since the Euler class $e(G,S^1)$ is of degree $1$, we have $e(G,S^1)=q$.  The other cases are similar.    
\end{proof}

More generally, for $X$ a $K$-CW complex, we have a sphere bundle:
\begin{equation}\label{eq:liebund2}S^n \rightarrow EK \times_L X \rightarrow EK \times_K X\end{equation}
and a Gysin sequence:
\begin{equation}\label{eq:gysin1}\begin{tikzcd} H^*_K(X) \arrow{r}{e(X)\cup -}& H^{*+n+1}_K(X) \arrow{r}{p^*} & H^{*+n+1}_L(X)\arrow{r} & \dots, \end{tikzcd}\end{equation}
where $e(X)$ is the Euler class of the bundle (\ref{eq:liebund2}).  By construction, we have a map of bundles:
\[\begin{tikzcd}
EK \times_L X \arrow{r}\arrow{d}{\pi_L} & EK\times_K X \arrow{d}{\pi_K} \\
BL \arrow{r} & BK.
\end{tikzcd}\]
and by functoriality of the Euler class, we have
\begin{equation}\label{eq:eulerfunc} 
e(X) =\pi^*_K(e(K,L)).
\end{equation}
\begin{fact}\label{fct:eulerclasses}
By (\ref{eq:eulerfunc}), $e(X)=q,q^2,Q,0$ for types (\ref{itm:typ1})-(\ref{itm:typ4}), respectively, for any $K$-CW complex $X$.  
\end{fact}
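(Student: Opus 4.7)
The statement is essentially a direct combination of two ingredients already established, so my plan is to spell out the substitution rather than do any real work. The setup gives us the fiberwise sphere bundle $S^n \to EK \times_L X \to EK \times_K X$, which sits over the universal sphere bundle $S^n \to BL \to BK$ via the projection $\pi_K: EK \times_K X \to BK$. Equation (\ref{eq:eulerfunc}) records exactly the naturality of the Euler class under pullback: $e(X) = \pi_K^*(e(K,L))$.

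So the proof I have in mind has two steps. First, invoke Proposition \ref{prop:listofgys} to read off the Euler classes $e(K,L) \in H^{n+1}(BK)$ in the four cases: $e(G,S^1) = q$, $e(G,\mathbb{Z}/4) = q^2$, $e(\mathbb{Z}/4,\mathbb{Z}/2) = Q$, and $e(S^1,\mathbb{Z}/2) = 0$. Second, apply $\pi_K^*$ to each of these classes. The module structure on $\tilde{H}^*_K(X)$ is defined precisely via $\pi_K^*$ (as recalled earlier in the section after diagram (\ref{eq:equiquot})), and by the standing convention we denote the image $\pi_K^*(\alpha) \in \tilde{H}^*_K(X)$ of a class $\alpha \in H^*(BK)$ by the same symbol. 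Hence in each case the element $e(X) \in \tilde{H}^*_K(X)$ is, respectively, $q, q^2, Q, 0$, which is what is asserted.

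There is no real obstacle here; the only thing to be careful about is the notational convention that identifies elements of $H^*(BK)$ with their images in $\tilde{H}^*_K(X)$ under $\pi_K^*$, so that an identity like ``$e(X) = q$'' makes sense as an equation in $\tilde{H}^*_K(X)$. In the last case ($e(S^1,\mathbb{Z}/2) = 0$), $\pi_K^*(0) = 0$ tautologically, which is why the corresponding Gysin sequence splits into short exact sequences, a fact that will be used downstream. This is why the statement is recorded as a \emph{fact} rather than a proposition.
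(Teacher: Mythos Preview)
Your proposal is correct and matches the paper's approach exactly: the paper offers no separate proof for this Fact, recording it as an immediate consequence of equation (\ref{eq:eulerfunc}) together with the Euler classes computed in Proposition \ref{prop:listofgys}, which is precisely what you spell out. Your remark about the notational convention identifying $\alpha \in H^*(BK)$ with $\pi_K^*(\alpha)$ is the only point that needs care, and you handle it appropriately.
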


We can now relate, in the case of spaces of type SWF, the $\mathbb{Z}/2,\mathbb{Z}/4,S^1,$ and $G$-cohomology theories.  

We adapt a definition of \cite{flin2} to our setting.
\begin{defn}\label{def:absgys}
Let $\mathcal{S}=(L \rightarrow K \rightarrow S^n)$ be one of the sequences of groups in Proposition \ref{prop:listofgys}.  An \emph{abstract $\mathcal{S}$-Gysin sequence} $\mathcal{G}$ consists of the following:
\begin{enumerate}
\item\label{itm:abs1} A $H^*(BK)$-module $M^K$, a $H^*(BL)$-module $M^L$, both graded by a $\mathbb{Z}$-coset of $\mathbb{Q}$ and bounded below.  
\item\label{itm:abs2} An exact triangle of $H^*(BK)$-modules.
\begin{equation}\label{eq:gysintriangleabs}
\begin{tikzcd}
M^K \arrow{rr}{e(K,L)} & &M^K\arrow{ld}{p^*} \\
&  \mathrm{res}^K_L M^L\arrow{lu}{\iota^*} & 
\end{tikzcd}
\end{equation}
where $e(K,L)$ is the Euler class of $H^*(BK)$ as in Proposition \ref{prop:listofgys}, acting on the $H^*(BK)$-module $M^K$.  Further, $\iota^*$ has degree $-n$ and $p^*$ has degree $0$.  
\item\label{itm:abs3} In sufficiently high degrees, the triangle (\ref{eq:gysintriangleabs}) is isomorphic to the exact triangle corresponding to $\mathcal{S}$ from Proposition \ref{prop:listofgys}, perhaps with grading shifted.  
\end{enumerate}

\end{defn}
\begin{prop}\label{prop:exiofabsgys} 
For every $X \in \E$, and $\mathcal{S}=(L \rightarrow K \rightarrow S^n)$ the Gysin sequence 
\begin{equation}\label{eq:putativegysin}\begin{tikzcd}
\tq_K(X) \arrow{rr}{e(K,L)} & & \tq_K(X)\arrow{ld}{p^*} \\
&  \tq_L(X)\arrow{lu}{\iota^*} & 
\end{tikzcd}\end{equation}
is an abstract $\mathcal{S}$-Gysin sequence, for $\mathcal{S}$ of type (\ref{itm:typ1})-(\ref{itm:typ4}).  The grading shift in (\ref{itm:abs3}) is $2\mu(X)$, upward.   
\end{prop}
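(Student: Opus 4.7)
The plan is to apply the Gysin sequence (\ref{eq:gysin1}) associated to the sphere bundle $S^n \to EK \times_L X \to EK \times_K X$, for each of the four fiber sequences $\mathcal{S}$ in Proposition \ref{prop:listofgys}, and to check the three conditions of Definition \ref{def:absgys} in turn.

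Conditions (\ref{itm:abs1}) and (\ref{itm:abs2}) are largely bookkeeping. The $H^*(BK)$- and $H^*(BL)$-module structures, exactness, and the degrees of $p^*$ and $\iota^*$ are standard properties of the Gysin sequence (\ref{eq:gysin1}). Boundedness below of $\tq_K(X)$ follows because $X$ admits a representative as a finite $G$-CW complex, and the $\mathbb{Z}$-coset of $\mathbb{Q}$ grading comes from the rational shift $m + 4n$ built into $\tq_K((X,m,n)) = \tq_K(X)[m+4n]$. The identification of the multiplication map in (\ref{eq:putativegysin}) with the Euler class action $e(K,L) \cup -$ on $\tq_K(X)$ is exactly Fact \ref{fct:eulerclasses}.

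Condition (\ref{itm:abs3}) is the substantive part. I would use naturality of the Gysin sequence with respect to the $G$-equivariant inclusion $X^{S^1} \hookrightarrow X$ to obtain a morphism of abstract Gysin triangles. By Theorem \ref{thm:equivlocalzn}, this inclusion induces an isomorphism of Borel cohomologies after inverting the appropriate element (one of $v, U, W$, depending on $K$); since $X$ is a finite $G$-CW complex, the corresponding torsion submodule is bounded above in grading, so the inclusion induces an isomorphism on the nose in sufficiently high degrees. Because $X^{S^1}$ is $G$-homotopy equivalent to $(\tilde{\mathbb{R}}^s)^+$ with $s \equiv 2\mu(X) \bmod 4$, Proposition \ref{prop:susp} yields $\tq_K(X^{S^1}) \cong H^{*-s}(BK)$, and by naturality of the Euler class, the Gysin triangle for $X^{S^1}$ is precisely the Gysin sequence of Proposition \ref{prop:listofgys} shifted upward by $s$. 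After the additional shift by $m + 4n$ coming from $X = [(X_0, m, n)] \in \E$, the total upward shift becomes $s - m - 4n \equiv 2\mu(X) \bmod 4$, as claimed.

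The main obstacle is condition (\ref{itm:abs3}): one must verify that the localization isomorphism of Theorem \ref{thm:equivlocalzn} genuinely intertwines the two Gysin triangles (for $X^{S^1}$ and $X$), not merely yielding an abstract isomorphism of modules, and that the grading shift propagates correctly through all three edges of the triangle under the further rational shift by $m + 4n$. Once these compatibilities are set up, the remaining verifications are routine diagram chases.
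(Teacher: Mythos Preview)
Your proposal is correct, and your handling of conditions (\ref{itm:abs1}) and (\ref{itm:abs2}) matches the paper's. For condition (\ref{itm:abs3}), however, you take a different route from the paper.

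You argue geometrically: use naturality of the Gysin sequence under the inclusion $X^{S^1}\hookrightarrow X$ to obtain a map of triangles, then invoke equivariant localization to see that this map of triangles is an isomorphism in high degrees. This is sound, but as you yourself flag, it requires checking that the localization isomorphism really is a map of Gysin triangles (naturality of $p^*$, $\iota^*$, and the Euler-class multiplication under pullback along $X^{S^1}\to X$). The paper sidesteps this compatibility check with a rigidity argument: once one knows (from type SWF plus localization) that in high degrees $\tq_K(X)\cong H^{*+2\mu(X)}(BK)$ and $\tq_L(X)\cong H^{*+2\mu(X)}(BL)$ as graded modules, the same elementary inspection used in the proof of Proposition~\ref{prop:listofgys} shows there is a \emph{unique} choice of $p^*$ and $\iota^*$ making the triple into an exact triangle with the given Euler-class edge. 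Hence the triangle must agree with the standard one, without ever producing an explicit comparison map. Your approach is more constructive and perhaps more transparent; the paper's is shorter and avoids the naturality verification you identified as the main obstacle.
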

\begin{proof}
Properties (\ref{itm:abs1}) and (\ref{itm:abs2}) of Definition \ref{def:absgys} are automatically satisfied for (\ref{eq:putativegysin}); we prove Property (\ref{itm:abs3}).  In sufficiently high degrees $d\geq N$ for some $N$, $\tu^d_K(X)$ must be isomorphic to $H^{d+2\mu(X)}(BK)$ and $\tu^d_L(X)$ must be isomorphic to $H^{d+2\mu(X)}(BL)$, using that $X$ is of type SWF.  Recall from the proof of Proposition \ref{prop:listofgys} that there is only one choice of maps $p^*$ and $\iota^*$ that make the triples $H^*(BK),H^*(BK)$ and $H^*(BL)$ into exact triangles.  Since $\tq_K(X)$ and $\tq_L(X)$ are isomorphic to $H^{*+2\mu(X)}(BK)$ and $H^{*+2\mu(X)}(BL)$, the same reasoning as in the proof of Proposition \ref{prop:listofgys} shows there is only one choice of maps $p^*$ and $\iota^*$ that make (\ref{eq:putativegysin}) exact in high degrees (Namely, the $p^*$ and $\iota^*$ listed).  This establishes property (\ref{itm:abs3}) of Definition \ref{def:absgys}. 
\end{proof} 

In order to prove Proposition \ref{prop:z2z4}, the precursor to Theorem \ref{thm:dvsz4d}, we will need to compare $S^1$ and $\mathbb{Z}/4$-invariants despite there being no Gysin sequence relating $S^1$ and $\mathbb{Z}/4$ homology.  As an intermediate step, we define
\begin{equation}\label{eq:defofdzt}
\delta_{\mathbb{Z}/2}(X)=\frac{1}{2}(\mathrm{min}\{ m\mid \exists x \in H^m_{\mathbb{Z}/2}(X), W^\ell x\neq 0 \; \mathrm{for\; all}\; \ell \geq 0\})
\end{equation} 
for $X \in \E$.  

A priori, $\dzt(X)-\mu(X)$ may be a half-integer (the next lemma implies it is, in fact, an integer). 
\begin{lem}\label{lem:z2s1}
Let $X \in \E$.  Then $\delta_{\mathbb{Z}/2}(X)=\delta(X)$.
\end{lem}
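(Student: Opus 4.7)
My plan is, first, to note that on $\mathfrak{E}$ the Borel cohomology is merely a grading shift of the SWF-space Borel cohomology (cf.\ (\ref{eq:borelgeneral})), so the summands $-\tfrac{m}{2}-2n$ in $\delta$ and $\delta_{\mathbb{Z}/2}$ cancel and the problem reduces to showing, for $X$ a space of type SWF at level $s$, that $d(X) = m_0(X)$, where
\[ m_0(X) := \min\{m \mid \exists\, x \in \tilde{H}^m_{\mathbb{Z}/2}(X),\; W^\ell x \neq 0 \text{ for all } \ell \geq 0\}. \]

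The main idea is to characterize both $d(X)$ and $m_0(X)$ as the smallest degrees in which the restriction map $r_K \colon \tilde{H}^*_K(X) \to \tilde{H}^*_K(X^{S^1})$ is nonzero, for $K = S^1$ or $\mathbb{Z}/2$. By Theorem \ref{thm:equivlocalzn}, $r_K$ is an isomorphism after inverting the positive-degree generator ($U$ or $W$); since $\tilde{H}^*_K(X^{S^1}) = H^{*-s}(BK)$ is torsion-free, an element of $\tilde{H}^*_K(X)$ is a tower element if and only if $r_K$ sends it to something nonzero. With this characterization in hand, I would invoke Proposition \ref{prop:listofgys}(\ref{itm:typ4}): the Type~(\ref{itm:typ4}) Euler class is zero, so the Gysin sequence degenerates into a natural short exact sequence
\[
0 \to \tilde{H}^*_{S^1}(X) \xrightarrow{p^*} \tilde{H}^*_{\mathbb{Z}/2}(X) \xrightarrow{\iota^*} \tilde{H}^{*-1}_{S^1}(X) \to 0,
\]
with $p^*U = W^2$. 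The inequality $m_0(X) \leq d(X)$ then falls out immediately: if $y$ is a $U$-tower element, $W^{2\ell}(p^*y) = p^*(U^\ell y) \neq 0$ for every $\ell$, and this forces every $W^\ell(p^*y)$ to be nonzero, so $p^*y$ is a $W$-tower element in the same degree.

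For the reverse inequality $d(X) \leq m_0(X)$ I would work with the connecting map $\iota^*$ together with its explicit form on the fixed set: on $X^{S^1}$, the Gysin sequence reduces (up to a shift by $s$) to the universal one $0 \to \mathbb{F}[U] \to \mathbb{F}[W] \to \mathbb{F}[U] \to 0$, where $\iota^*_{X^{S^1}}(W^{2k+1}) = U^k$ and $\iota^*_{X^{S^1}}(W^{2k}) = 0$. Naturality of the Gysin sequence under $X^{S^1} \hookrightarrow X$ gives $r_{S^1}\circ\iota^*_X = \iota^*_{X^{S^1}}\circ r_{\mathbb{Z}/2}$. Taking $x$ a minimal $W$-tower element, one has $r_{\mathbb{Z}/2}(x) = W^{m_0 - s}$, and the closing step is a parity argument: if $m_0 - s$ were odd, then $r_{S^1}(\iota^*_X x)$ would be a nonzero power of $U$, making $\iota^*_X x$ a $U$-tower element at degree $m_0 - 1$ and contradicting $m_0 \leq d(X)$; so $m_0 - s$ must be even, and replaying the argument with $Wx$ in place of $x$ (whose restriction now has odd exponent) produces a $U$-tower element at degree $m_0$, giving $d(X) \leq m_0$. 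The only real obstacle is exactly this parity analysis; everything else is routine bookkeeping with the Localization Theorem and naturality of the Gysin sequence.
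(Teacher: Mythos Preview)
Your proposal is correct and follows essentially the same approach as the paper: both use the Type~(\ref{itm:typ4}) Gysin sequence, obtaining $m_0(X)\leq d(X)$ via $p^*$ and the reverse inequality via $\iota^*$ combined with a parity argument. The only difference is organizational: you phrase things via the restriction map $r_K$ to the fixed-point set and a naturality square, whereas the paper packages the same information as Property~(\ref{itm:abs3}) of the abstract Gysin sequence and introduces auxiliary quantities $\underline{d}_{\mathbb{Z}/2}(X)$, $\overline{d}_{\mathbb{Z}/2}(X)$ for the two parities; your direct case split on the parity of $m_0-s$ is equivalent to the paper's deduction that $\underline{d}_{\mathbb{Z}/2}=\overline{d}_{\mathbb{Z}/2}-1$.
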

\begin{proof}
We will use the Gysin sequence of type (\ref{itm:typ4}) associated to $X$.  For now, fix $p^*$ and $\iota^*$ to refer to the Gysin sequence maps of that type.  
 
First, we establish $\delta(X)\geq\delta_{\mathbb{Z}/2}(X)$.  Let $x \in \tu^m_{S^1}(X)$ be $U$-nontorsion.  For sufficiently large $\ell$, by Property (\ref{itm:abs3}) of Definition \ref{def:absgys}, $p^*(U^{\ell} x)\neq 0$.  By the equivariance property (\ref{itm:abs2}) of the same definition, $p^*(U^{\ell} x)=W^{2\ell}p^*x$, and so $p^*x$ is a $W$-nontorsion element of $\tu^m_{\mathbb{Z}/2}(X)$.  For notational convenience, define 
\[ \underline{d}_{\mathbb{Z}/2}(X) = \min\{ m\equiv 2\mu(X) \bmod{2} \mid \exists x \in H^m_{\mathbb{Z}/2}(X), W^\ell x\neq 0 \; \text{for all}\; \ell \geq 0\}.\]We have then
\begin{equation}\label{eq:prop3eq1} \min \{ m \equiv 2\mu(X) \bmod{2} \mid \exists x \in H^m_{S^1}(X), \; U^\ell x \neq 0 \;\text{for all}\; \ell \geq 0 \} \geq \underline{d}_{\mathbb{Z}/2}(X).\end{equation}
We note that the only $U$-nontorsion elements of $\tq_{S^1}(X)$ are in degree $d \equiv 2\mu(X) \bmod{2}$, by Proposition \ref{prop:exiofabsgys}.  So the left-hand side of (\ref{eq:prop3eq1}) is $2\delta(X)$.  We also define
\[\overline{d}_{\mathbb{Z}/2}(X)= \min\{ m\equiv 2\mu(X)+1 \bmod{2} \mid \exists x \in H^m_{\mathbb{Z}/2}(X), W^\ell x\neq 0 \; \mathrm{for\; all}\; \ell \geq 0\}.\]  By definition, $\delta_{\mathbb{Z}/2}(X)=\frac{\min\{\underline{d}_{\mathbb{Z}/2}(X),\; \overline{d}_{\mathbb{Z}/2}(X)\}}{2}$. 

We next show the inequality opposite to (\ref{eq:prop3eq1}).  

Let $x\in \tu^m_{\mathbb{Z}/2}(X)$ be a $W$-nontorsion element, with $m \equiv 2\mu(X)+1 \bmod{2}$.  Then, by Property (\ref{itm:abs3}) of Definition \ref{def:absgys}, $\iota^*(W^{2\ell}x)=U^{\ell}\iota^*x$ must be nonzero.  In particular, $\iota^*x\in \tu^{m-1}_{S^1}(X)$ is $U$-nontorsion.  Then we obtain 
\begin{equation}\label{eq:prop3eq2} 2\delta(X)\leq \overline{d}_{\mathbb{Z}/2}(X)-1.\end{equation}
Furthermore,
\begin{equation}\label{eq:prop3eq3}
\overline{d}_{\mathbb{Z}/2}(X)=\underline{d}_{\mathbb{Z}/2}(X)\pm 1,
\end{equation}
since for $x$ in one of the sets corresponding to $\underline{d}_{\mathbb{Z}/2}$ or $\overline{d}_{\mathbb{Z}/2}$, $Wx$ is in the other.  

But, combining (\ref{eq:prop3eq1}) and (\ref{eq:prop3eq2}), we have $\underline{d}_{\mathbb{Z}/2}(X)\leq \overline{d}_{\mathbb{Z}/2}(X)-1$.  So, from (\ref{eq:prop3eq3}), we obtain $\underline{d}_{\mathbb{Z}/2}(X)= \overline{d}_{\mathbb{Z}/2}(X)-1$.  It follows that $2\delta(X)=\underline{d}_{\mathbb{Z}/2}(X)$.  

Using the definition of $\delta_{\mathbb{Z}/2}(X)$, the proof is complete.  \end{proof}

The following statement corresponds to Theorem \ref{thm:dvsz4d} of the Introduction.  
\begin{prop}\label{prop:z2z4}
Let $X\in \E$.  Then:
\begin{equation}\label{eq:propz2z4} \delta(X)=\frac{1}{2}(\mathrm{min}\{ m\equiv 2\mu(X)+1 \bmod{2} \mid \exists x \in H^m_{\mathbb{Z}/4}(X),\; U^\ell x\neq 0 \; \mathrm{for \; all}\; \ell\geq 0, \; Qx=0\}-1).\end{equation}
\end{prop}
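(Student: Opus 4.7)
The plan is to combine Lemma~\ref{lem:z2s1}, which identifies $\delta(X)$ with $\delta_{\mathbb{Z}/2}(X)$, with the Type~(\ref{itm:typ3}) Gysin sequence from Proposition~\ref{prop:listofgys}, relating $\tilde{H}^*_{\mathbb{Z}/4}(X)$ to $\tilde{H}^*_{\mathbb{Z}/2}(X)$. Write
\[
\tilde{m} := \min\bigl\{\, m\equiv 2\mu(X)+1\bmod 2 \,\bigm|\, \exists\, x \in \tilde{H}^m_{\mathbb{Z}/4}(X),\ U^{\ell} x\neq 0\ \forall\, \ell\geq 0,\ Qx=0\,\bigr\},
\]
so that the right-hand side of (\ref{eq:propz2z4}) is $\tfrac{1}{2}(\tilde{m}-1)$. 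Extracting from the proof of Lemma~\ref{lem:z2s1} that $2\delta(X) = \overline{d}_{\mathbb{Z}/2}(X)-1$, where $\overline{d}_{\mathbb{Z}/2}(X)$ is the minimum $m\equiv 2\mu(X)+1\bmod 2$ admitting a $W$-nontorsion class in $\tilde{H}^m_{\mathbb{Z}/2}(X)$, it suffices to establish $\tilde{m} = \overline{d}_{\mathbb{Z}/2}(X)$.

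For the inequality $\tilde{m} \leq \overline{d}_{\mathbb{Z}/2}(X)$, given a $W$-nontorsion $y\in \tilde{H}^m_{\mathbb{Z}/2}(X)$ with $m\equiv 2\mu(X)+1\bmod 2$, I would set $x := \iota^*(y)\in \tilde{H}^m_{\mathbb{Z}/4}(X)$, where $\iota^*$ is the connecting map of the Type~(\ref{itm:typ3}) Gysin sequence. Exactness immediately yields $Qx=0$. Using the $H^*(B\mathbb{Z}/4)$-module structure of $\iota^*$ together with $p^*U=W^2$, one obtains $U^{\ell} x = \iota^*(W^{2\ell} y)$. Property~(\ref{itm:abs3}) of Definition~\ref{def:absgys}, via Proposition~\ref{prop:exiofabsgys}, guarantees that in sufficiently high degrees the Gysin sequence reduces to the classifying-space one, in which $\iota^*$ sends $W^{2k+1}\mapsto QU^k$; since each $W^{2\ell}y$ remains $W$-nontorsion in the correct parity, $\iota^*(W^{2\ell}y)\neq 0$ for all sufficiently large $\ell$. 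A single vanishing $U^{\ell_0}x = 0$ would propagate forward to $U^\ell x=0$ for all $\ell\geq \ell_0$, so in fact $U^\ell x \neq 0$ for all $\ell\geq 0$, and $x$ witnesses $\tilde{m} \leq m$.

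For the reverse inequality $\overline{d}_{\mathbb{Z}/2}(X) \leq \tilde{m}$, suppose $x\in \tilde{H}^m_{\mathbb{Z}/4}(X)$ satisfies $U^{\ell}x\neq 0$ for all $\ell\geq 0$, $Qx=0$, and $m\equiv 2\mu(X)+1\bmod 2$. Exactness at $\ker Q$ produces $z\in \tilde{H}^m_{\mathbb{Z}/2}(X)$ with $\iota^*(z)=x$, and equivariance gives $\iota^*(W^{2\ell}z) = U^\ell x\neq 0$, whence $W^{2\ell}z\neq 0$ for all $\ell\geq 0$. Since $W^kz=0$ would force $W^{2\ell}z=0$ for every $2\ell\geq k$, the class $z$ must be $W$-nontorsion, giving $\overline{d}_{\mathbb{Z}/2}(X)\leq m$. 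Combining both inequalities, $\tilde{m} = \overline{d}_{\mathbb{Z}/2}(X)$, and the proposition follows.

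I expect the principal obstacle to be the first direction: verifying that $\iota^*(y)$ is $U$-nontorsion. This relies essentially on the high-degree identification in Property~(\ref{itm:abs3}) of Definition~\ref{def:absgys} to ensure $\iota^*$ is genuinely nontrivial on the odd $W$-power tower; without this input, $\iota^*(y)$ might only be $Q$-closed but $U$-torsion. Once this step is in place, everything else is a formal consequence of the exact triangle and the $H^*(B\mathbb{Z}/4)$-module structure of the Gysin maps.
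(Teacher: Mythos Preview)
Your proposal is correct and follows essentially the same route as the paper's own proof: reduce to $\delta_{\mathbb{Z}/2}(X)$ via Lemma~\ref{lem:z2s1}, then run the Type~(\ref{itm:typ3}) Gysin sequence in both directions to match $\tilde{m}$ with $\overline{d}_{\mathbb{Z}/2}(X)$. Your treatment is in fact slightly more explicit than the paper's in two places---the equivariance identity $U^{\ell}\iota^*(y)=\iota^*(W^{2\ell}y)$ and the propagation argument that a single vanishing $U^{\ell_0}x=0$ would kill all higher powers---but these are exactly the steps the paper leaves implicit.
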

\begin{proof}
We denote the right-hand side of (\ref{eq:propz2z4}) by $\delta_{\mathbb{Z}/4}(X)$.  We will consider the abstract Gysin sequence of type (\ref{itm:typ3}) associated to $X$; fix $p^*$ and $\iota^*$ to refer to the maps in this type of Gysin sequence.  Using Lemma \ref{lem:z2s1}, we need only show \begin{equation}\label{eq:prop3dztvsz4} \dzt(X)=\delta_{\mathbb{Z}/4}(X).\end{equation}
We start by showing $\dztzf(X)\leq \dzt(X)$.  We note that any $W$-nontorsion element $x$ of $\tq_{\mathbb{Z}/2}(X)$ with $\mathrm{deg}\; x \equiv 2\mu(X)+1 \bmod{2}$ must have $U^\ell\iota^* x=\iota^* W^{2\ell}x\neq 0$ for $\ell$ sufficiently large.  However, $Q\iota^* x =0$ by exactness of (\ref{eq:gysintriangleabs}).  Thus, if $x \in \tu^m_{\mathbb{Z}/2}(X)$ with $m \equiv 2\mu(X)+1\; \bmod{2}$ is $W$-nontorsion, then there exists an element $\iota^* x \in \tu^m_{\mathbb{Z}/4}(X)$ which is $U$-nontorsion, and which is annihilated by $Q$.  Thus
\[ \min \{ m \equiv 2\mu(X)+1 \bmod{2} \mid \exists x \in \tu^m_{\mathbb{Z}/4}(X), \; U^\ell x \neq 0 \; \mathrm{for\; all}\; \ell \geq 0, \; Qx=0\}-1\leq \overline{d}_{\mathbb{Z}/2}(X)-1.\]
By the proof of Lemma \ref{lem:z2s1}, $\frac{\overline{d}_{\mathbb{Z}/2}(X)-1}{2}=\delta_{\mathbb{Z}/2}(X)$. Then 
\[ \dztzf(X)\leq \dzt(X).\] 

Next we show \begin{equation}\label{eq:prop3last}\dzt(X) \leq \dztzf(X).\end{equation}  Indeed, fix $m \equiv 2\mu(X)+1 \; \bmod{2}$ and $x \in \tu^m_{\mathbb{Z}/4}(X)$ so that $x$ is $U$-nontorsion and satisfies $Qx=0$.  Then $x \in \mathrm{Im} \;\iota$, say $x=\iota y$, by exactness of (\ref{eq:gysintriangleabs}).  However, since $x$ is $U$-nontorsion, $y$ is nontorsion as well, so we see:
\[\min\{ m\mid \exists x \in \tu^m_{\mathbb{Z}/2}(X), W^{\ell} x \neq 0 \; \text{for all }\; \ell \geq 0\} \leq 2\delta_{\mathbb{Z}/4}(X).\]
Recalling the definition of $\dzt(X)$, the above is precisely $\dzt(X)\leq \dztzf(X)$, completing the proof.  
\end{proof} 

The following proposition corresponds to Theorem \ref{thm:dvsg} from the Introduction.
\begin{prop}\label{prop:dfromg}
Let $X\in \E$.  Fix $N\in\mathbb{Z}$.  Then $\delta(X) \leq 2N+\mu(X)+1$ if and only if 
\begin{equation}\label{eq:espacesdfromg}
\frac{\min\{ m \equiv 2\mu(X)+2 \bmod{4} \mid \exists x \in \tu^m_G(X), \; v^\ell x \neq 0 \; \text{for all}\; \ell \geq 0, \; qx=0\}-2}{2} \leq 2N+\mu(X).
\end{equation}
\end{prop}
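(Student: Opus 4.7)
The plan is to apply the type (\ref{itm:typ1}) abstract Gysin sequence from Proposition \ref{prop:exiofabsgys},
\[
\tq_G(X) \xrightarrow{q} \tq_G(X) \xrightarrow{p^*} \mathrm{res}^G_{S^1}\tq_{S^1}(X) \xrightarrow{\iota^*} \tq_G(X),
\]
in which $\iota^*$ has degree $0$ and $p^* v = U^2$. Since suspension-desuspension shifts both sides of the claimed equivalence uniformly, I may assume $X$ is an honest space of type SWF, so that $\delta(X) \leq 2N+\mu(X)+1$ is the same as $d(X) \leq 4N+2\mu(X)+2$. The two facts I will use repeatedly are: (i) $qy=0 \iff y \in \mathrm{Im}\,\iota^*$ by exactness, and (ii) $v^\ell \iota^*(x) = \iota^*(U^{2\ell}x)$, by $H^*(BG)$-equivariance of $\iota^*$ together with $p^* v = U^2$.

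For the backward direction, I would take $y \in \tu^m_G(X)$ satisfying the right-hand conditions of (\ref{eq:espacesdfromg}). By (i), write $y = \iota^*(x)$ with $x \in \tu^m_{S^1}(X)$. Applying (ii), the $v$-nontorsion of $y$ forces $\iota^*(U^{2\ell}x) \neq 0$, hence $U^{2\ell}x \neq 0$ for all $\ell \geq 0$, so $x$ is $U$-nontorsion. Therefore $d(X) \leq m \leq 4N + 2\mu(X) + 2$, i.e., $\delta(X) \leq 2N+\mu(X)+1$.

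For the forward direction, I would start with a $U$-nontorsion element $x \in \tu^{d(X)}_{S^1}(X)$. Property (\ref{itm:abs3}) of Definition \ref{def:absgys} together with the explicit description of the standard Gysin sequence from Proposition \ref{prop:listofgys}(\ref{itm:typ1}) shows that, in sufficiently high degrees, $\iota^*$ vanishes on elements of degree $\equiv 2\mu(X) \bmod 4$ and is nonzero on elements of degree $\equiv 2\mu(X)+2 \bmod 4$; the underlying reason is that $\mathrm{Im}\,p^* = \f[U^2] \subset \f[U] = H^*(BS^1)$, so $\iota^*$ annihilates even powers of $U$ times the generator and is nonzero on the odd ones. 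If $d(X) \equiv 2\mu(X)+2 \bmod 4$, I set $y = \iota^*(x)$ in degree $d(X)$; otherwise $d(X) \equiv 2\mu(X) \bmod 4$ and I set $y = \iota^*(Ux)$ in degree $d(X)+2$. In either case $qy = 0$ by (i), and the parity calculation together with (ii) gives $v^\ell y = \iota^*(U^{2\ell}\cdot(\text{$x$ or $Ux$})) \neq 0$ for $\ell$ large, hence $y$ is $v$-nontorsion.

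The hard part will be the degree bound in the second case, where na\"ively $\deg y = d(X)+2$ could exceed $4N + 2\mu(X) + 2$ by $2$. This is resolved by a small mod-$4$ bookkeeping step: the congruence $d(X) \equiv 2\mu(X) \bmod 4$ combined with $d(X) \leq 4N+2\mu(X)+2$ forces $d(X) \leq 4N+2\mu(X)$ (since $4N+2\mu(X)+2 \not\equiv 2\mu(X) \bmod 4$), and hence $\deg y = d(X)+2 \leq 4N+2\mu(X)+2$ as needed.
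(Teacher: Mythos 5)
Your proof is correct and follows essentially the same route as the paper: the type (\ref{itm:typ1}) abstract Gysin sequence, exactness ($qy=0 \iff y\in\mathrm{Im}\,\iota^*$), and the $H^*(BG)$-equivariance relation $v^\ell\iota^*(x)=\iota^*(U^{2\ell}x)$. The paper sidesteps your mod-$4$ case split in the forward direction by taking a $U$-nontorsion element directly in degree $4N+2\mu(X)+2$ (always available since $d(X)\leq 4N+2\mu(X)+2$ and both are $\equiv 2\mu(X)\bmod 2$), a degree which is automatically $\equiv 2\mu(X)+2 \bmod 4$, so its image under $\iota^*$ is immediately $v$-nontorsion without the extra bookkeeping.
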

\begin{proof}
Denote the left-hand side of (\ref{eq:espacesdfromg}) by $\dg(X)$.  First, we show that $\dg(X) \leq 2N+\mu(X)$ implies $\delta(X) \leq 2N+\mu(X)+1$.  

Say $\dg(X)\leq 2N+\mu(X)$, that is, there exists a $v$-nontorsion element $x \in \tu^{4N+2\mu(X)+2}_G(X)$ so that $qx=0$.  Then, by exactness of (\ref{eq:gysintriangleabs}), $x \in \mathrm{Im}\; \iota^*$, say $x=\iota^* y$.   Since $x$ is $v$-nontorsion, $y$ must be $U$-nontorsion.  Thus, $\delta(X)\leq \frac{\mathrm{deg} y}{2}=2N+\mu(X)+1$.  

Next, say that $\delta(X) \leq 2N+\mu(X)+1$.  Let $x \in \tu^{4N+2\mu(X)+2}_{S^1}(X)$ be $U$-nontorsion.  Then by (\ref{itm:typ1}) of Proposition \ref{prop:listofgys}, $\iota^* x $ must be $v$-nontorsion.  In particular, $\iota^* x$ is a $v$-nontorsion element of $\tu^{4N+2\mu(X)+2}_G(X)$ with $q(\iota^* x)=0$.  Thus  $\dg(X)\leq 2N+\mu(X)$, as needed.  
\end{proof}  

The following proposition corresponds to Theorem \ref{thm:z4dvsg} from the Introduction.
\begin{prop}\label{prop:dupdownfromg}
Let $X\in \E$.  Then $\underline{\delta}(X)\leq 2N+\mu(X)+1$, for some integer $N$, if and only if 
\begin{equation}\label{eq:dlowere}
\frac{\min\{ m \equiv 2\mu(X)+2 \; \bmod{4} \mid\exists x \in \tu^m_G(X), \; v^\ell x \neq 0 \; \text{for all}\; \ell \geq 0, \; q^2x=0\}-2}{2} \leq 2N+\mu(X).
\end{equation}
Further, $\overline{\delta}(X) \leq 2N+\mu(X)+1$ if and only if
\begin{equation}\label{eq:duppere}
\frac{\min\{ m\equiv 2\mu(X)+1\; \bmod{4} \mid \exists x \in \tu^m_G(X), \; v^\ell x \neq 0 \; \text{for all}\; \ell \geq 0, \; q^2x=0\}-1}{2} \leq 2N+\mu(X).
\end{equation}
\end{prop}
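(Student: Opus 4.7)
The approach is to imitate the proof of Proposition~\ref{prop:dfromg}, replacing the Gysin sequence of type~(\ref{itm:typ1}) by the one of type~(\ref{itm:typ2}) supplied by Propositions~\ref{prop:listofgys} and~\ref{prop:exiofabsgys}. This sequence relates $\tq_G(X)$ and $\tq_{\mathbb{Z}/4}(X)$ with Euler class $q^2$ and with $p^*v = U^2$, $p^*q = Q$. Since $\iota^*$ is $H^*(BG)$-linear, for any $z \in \tq_{\mathbb{Z}/4}(X)$ one has
\[
q^2\cdot \iota^*(z) = \iota^*(p^*(q^2)\,z) = \iota^*(Q^2 z) = 0, \qquad v^\ell \cdot \iota^*(z) = \iota^*(U^{2\ell}\,z),
\]
and these two identities drive the entire argument. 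I carry out the proof for $\underline{\delta}$; the proof for $\overline{\delta}$ is identical after shifting the relevant parity classes by $1$.

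For the forward direction of (\ref{eq:dlowere}), suppose there exists $v$-nontorsion $x \in \tu^m_G(X)$ with $m \equiv 2\mu(X)+2 \bmod 4$, $m \leq 4N+2\mu(X)+2$, and $q^2 x = 0$. By exactness of (\ref{eq:putativegysin}) at $\tq_G(X)$, we have $x = \iota^*(y)$ for some $y \in \tu^{m+1}_{\mathbb{Z}/4}(X)$. The identity $v^\ell x = \iota^*(U^{2\ell} y)$ together with $v$-nontorsion of $x$ forces $U^{2\ell} y \neq 0$ for all $\ell \geq 0$, and then $U^{2\ell-1} y \neq 0$ as well (else $U \cdot U^{2\ell-1} y = U^{2\ell} y = 0$); hence $y$ is $U$-nontorsion in degree $m+1 \equiv 2\mu(X)+1 \bmod 2$, so $\underline{d}(X) \leq m$ and $\underline{\delta}(X) \leq m/2 \leq 2N+\mu(X)+1$.

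For the reverse direction, let $y \in \tu^{m'}_{\mathbb{Z}/4}(X)$ be $U$-nontorsion with $m' \equiv 2\mu(X)+1 \bmod 2$ and $m' \leq 4N+2\mu(X)+3$. I split on $m' \bmod 4$. If $m' \equiv 2\mu(X)+3 \bmod 4$, set $x := \iota^*(y) \in \tu^{m'-1}_G(X)$; then $\deg x = m'-1 \equiv 2\mu(X)+2 \bmod 4$ and $\deg x \leq 4N+2\mu(X)+2$. If instead $m' \equiv 2\mu(X)+1 \bmod 4$, this congruence together with the upper bound forces the tighter estimate $m' \leq 4N+2\mu(X)+1$, so $x := \iota^*(Uy) \in \tu^{m'+1}_G(X)$ also has degree at most $4N+2\mu(X)+2$ and still lies in the class $\equiv 2\mu(X)+2 \bmod 4$. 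In either case, $q^2 x = 0$ by the first identity above.

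The step that goes beyond Proposition~\ref{prop:dfromg}, and constitutes the main technical point, is verifying $v$-nontorsion of the constructed $x$. Unlike in type~(\ref{itm:typ1}) where $\iota^*$ has degree $0$, here $\iota^*$ has degree $-1$, and in the standard model on $H^*(B\mathbb{Z}/4)$ a direct computation gives $\iota^*(U^{2k}) = \iota^*(QU^{2k}) = 0$ while $\iota^*(U^{2k+1}) = qv^k$ and $\iota^*(QU^{2k+1}) = q^2 v^k$. Applying Property~(\ref{itm:abs3}) of Definition~\ref{def:absgys} to identify $\tu^*_{\mathbb{Z}/4}(X)$ with $H^*(B\mathbb{Z}/4)$ in sufficiently high degrees, the case split above was arranged so that the argument $U^{2\ell} y$ (first case) or $U^{2\ell+1} y$ (second case) lies asymptotically in the ``odd $U$-power'' summand, making $v^\ell x = \iota^*(U^{2\ell+\epsilon} y)$ nonzero for all sufficiently large $\ell$; nonvanishing for every $\ell \geq 0$ then follows by the contrapositive of ``$v^j x = 0 \Rightarrow v^{j+k} x = 0$ for all $k \geq 0$''. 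The identical strategy, with $y$ in the parity class $\equiv 2\mu(X) \bmod 2$ and target congruence $\equiv 2\mu(X)+1 \bmod 4$, establishes~(\ref{eq:duppere}) for $\overline{\delta}$.
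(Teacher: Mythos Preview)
Your proof is correct and uses the same mechanism as the paper—the type~(\ref{itm:typ2}) Gysin sequence with Euler class $q^2$—so the two arguments are essentially the same.  The one organizational difference is in the reverse direction: the paper avoids your case split on $m'\bmod 4$ by simply taking the $U$-nontorsion witness in degree exactly $4N+2\mu(X)+3$, which is always possible since multiplying a $U$-nontorsion element by a power of $U$ keeps it $U$-nontorsion and shifts its degree upward in steps of $2$.  With that choice made, one lands directly in your first case and the split is unnecessary.  On the other hand, your explicit computation of $\iota^*$ on $H^*(B\mathbb{Z}/4)$ (showing $\iota^*(U^{2k})=\iota^*(QU^{2k})=0$ while $\iota^*(U^{2k+1})=qv^k$ and $\iota^*(QU^{2k+1})=q^2v^k$) spells out precisely what the paper's one-line appeal to Proposition~\ref{prop:listofgys}(\ref{itm:typ2}) is invoking, and makes transparent why the degree congruence of the starting element matters for $v$-nontorsion of $\iota^* y$.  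One small remark: the same asymptotic-kernel phenomenon for $\iota^*$ already occurs in type~(\ref{itm:typ1}) (there $\iota^*(U^{2k})=0$ as well), so this step does not really go beyond Proposition~\ref{prop:dfromg}; the paper handles it identically in both propositions by fixing the degree of the starting element.
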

\begin{proof}  
We will consider Gysin sequences of Type \ref{itm:typ2}.  Denote the left-hand side of (\ref{eq:dlowere}) by $\dglow(X)$, and that of (\ref{eq:duppere}) by $\dgup(X)$.  

First, we show that $\underline{\delta}(X)\leq 2N+\mu(X)+1$ implies $\dglow(X)\leq 2N+\mu(X)$.  Indeed, say $\underline{\delta}(X) \leq 2N+\mu(X)+1$ and $x\in \tu^{4N+2\mu(X)+3}_{\mathbb{Z}/4}(X)$ so that $x$ is $U$-nontorsion.  Then by (\ref{itm:typ2}) of Proposition \ref{prop:listofgys}, $\iota x$ is $v$-nontorsion.  Further, $q^2\iota x=0$ by exactness of (\ref{eq:gysintriangleabs}).  Thus
\[\min\{ m \equiv 2\mu(X)+2 \; \bmod{4} \mid\exists x \in \tu^m_G(X), \; v^\ell x \neq 0 \; \text{for all}\; \ell \geq 0, \; q^2x=0\}\leq 4N+2\mu(X)+2.\]
Then $\dglow(X)\leq 2N+\mu(X)$, as needed.  

Next, suppose $\dglow(X)\leq 2N+\mu(X)$; we will show $\underline{\delta}(X)\leq 2N+\mu(X)+1$.  Choose $x \in \tu_G^{4N+2\mu(X)+2}(X)$ so that $x$ is $v$-nontorsion and $q^2x = 0$.  Then $x \in \mathrm{Im} \;\iota$, say $x=\iota y$, and $y$ is $U$-nontorsion, in grading $4N+2\mu(X)+3$.  We then obtain $\underline{\delta}(X)\leq 2N+\mu(X)+1$, as needed.

The proof for $\overline{\delta}(X)$ is completely analogous.  
\end{proof}

Propositions \ref{prop:dfromg} and \ref{prop:dupdownfromg} cannot be improved, as we will see in Example \ref{ex:weirdgkoszulexamples}, in that there exist spaces $X_1,X_2$ with isomorphic $\tq_G(X_i)$ but different $\delta$, $\underline{\delta}$ and $\overline{\delta}$ invariants.   
\subsection{Seiberg-Witten Floer Homology}\label{subsec:gauge}
In this section we convert the results of the previous section into statements for three-manifolds.  First we recall the existence of the Seiberg-Witten Floer stable homotopy type.

\begin{thm}[Manolescu {\cite{ManolescuPin},\cite{ManolescuK}}]\label{thm:ManolescuSWF}
There is an invariant $\mathit{SWF}(Y,\mathfrak{s})$, the Seiberg-Witten Floer spectrum class, of rational homology three-spheres with spin structure $(Y,\s)$, taking values in $\E$.  A spin cobordism $(W,\mathfrak{t})$, with $b_2(W)=0$, from $Y_1$ to $Y_2$, induces a map $\mathit{SWF}(Y_1, \mathfrak{t}|_{Y_1}) \rightarrow \mathit{SWF}(Y_2,\mathfrak{t}|_{Y_2})$. The induced map is a homotopy-equivalence on $S^1$-fixed-point sets.  
\end{thm}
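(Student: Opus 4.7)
This is Manolescu's theorem, and my proof plan follows his construction: produce $\swf(Y,\s)$ as a $G$-equivariant Conley index of a finite-dimensional approximation of the Seiberg-Witten flow. After fixing a metric and a small generic perturbation, and working in Coulomb gauge with respect to a base connection induced by $\s$, the configuration space on $Y$ becomes a global Hilbert chart on which $\pin$ acts, with $S^1$ acting by the residual gauge symmetry and $j$ by charge conjugation. The charge conjugation symmetry is available precisely because $\s$ is a spin structure rather than merely $\mathrm{spin}^c$, which is why the theory naturally lifts from $S^1$-equivariance to $G$-equivariance.

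The main step is finite-dimensional approximation: project the gradient flow of the Chern-Simons-Dirac functional onto the span of the eigenvectors of the linearization whose eigenvalues lie in a bounded window $[-\lambda,\mu]$. Because Seiberg-Witten trajectories satisfy uniform a priori bounds, for any sufficiently large radius $R$ the Conley index of the maximal invariant set of the projected flow inside the ball $B_R$ is well-defined as a pointed $G$-space. Desuspending by the summands of the configuration space corresponding to the negative-eigenvalue subspaces of the Dirac and Hodge operators yields a well-defined element $\swf(Y,\s) \in \E$; independence from the choice of approximation parameters, perturbation, and metric follows from the continuation property of the Conley index. The identification of the $S^1$-fixed subspace with a suspension of $S^0$, required so that $\swf(Y,\s)$ comes from a space of type SWF, comes from the fact that the $S^1$-fixed locus is the reducible moduli, on which the flow is linear and its Conley index is a sphere.

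For a spin cobordism $(W,\mathfrak{t})$ with $b_2(W)=0$, I would perform an analogous parametrized finite-dimensional approximation on the cylindrical-end completion of $W$ and read off the induced map $\swf(Y_1,\mathfrak{t}|_{Y_1}) \to \swf(Y_2,\mathfrak{t}|_{Y_2})$ as a relative Conley-index map between the approximations at the two ends, with appropriate desuspension determined by the index of the parametrized Dirac operator. The map is $G$-equivariant by naturality of the construction. For the final clause, restrict the entire construction to the $S^1$-fixed locus: the equations degenerate to the linear family of Dirac equations on $W$, and the hypothesis $b_2(W)=0$ removes any reducible obstruction from $H^2(W;\mathbb{R})$, so the relative fixed-point Conley index map is, up to the correct desuspension, the obvious $G$-equivariant map between one-point compactifications of negative Dirac eigenspaces. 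This map is a homotopy equivalence by the standard linear spectral-flow argument.

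The main obstacle in a first-principles treatment is the bookkeeping around the desuspensions in $\E$ and the continuation-invariance argument needed to show independence of all choices simultaneously; rather than redo these delicate verifications I would invoke \cite{ManolescuPin} and \cite{ManolescuK} directly.
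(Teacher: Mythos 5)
The paper itself gives no proof of this theorem; it is recorded purely as a citation to \cite{ManolescuPin} and \cite{ManolescuK}, so there is no internal argument to compare yours against. Your sketch is a faithful summary of Manolescu's construction: Conley indices of spectral truncations of the Seiberg--Witten flow in the global Coulomb slice (a global chart exactly because $b_1(Y)=0$), with $j \in \pin$ acting by quaternionic multiplication on spinors (available because $\s$ is spin rather than merely $\mathrm{spin}^c$), formal desuspension by the negative spectral subspaces to obtain a well-defined class in $\E$, continuation invariance of the Conley index to eliminate the dependence on the cutoff, perturbation, and metric, and a parametrized version over a cylindrical-end completion of $W$ for the cobordism map. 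You close by deferring to the cited references for the detailed verification, which is precisely what the paper does, so there is no substantive divergence in approach.

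One small imprecision worth flagging in your final paragraph: the role of $b_2(W)=0$ in the fixed-point clause is really a dimension count rather than ``removing a reducible obstruction from $H^2(W;\mathbb{R})$.'' The $S^1$-fixed part of the relative Bauer--Furuta map is a map between representation spheres whose dimension shift involves $b_2^+(W)$, and the hypothesis $b_2(W)=0$ forces $b_2^+(W)=0$, which is what makes the shift vanish so that the linear map on fixed points can be a homotopy equivalence.
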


Manolescu constructs $\mathit{SWF}(Y,\s)$ by using finite-dimensional approximation to the Seiberg-Witten equations on the Coulomb slice.  From $\mathit{SWF}(Y,\s)$ one extracts homology cobordism invariants as in the following definition.

\begin{defn}\label{def:mano3invars} For $(Y,\s)$ a spin rational homology three-sphere, the Manolescu invariants $\alpha(Y,\s),$ $\beta(Y,\s), \gamma(Y,\s)$ and $\delta(Y,\s)$ are defined by $\alpha(\mathit{SWF}(Y,\s)),$ $\beta(\mathit{SWF}(Y,\s)),$ $\gamma(\mathit{SWF}(Y,\s))$ and $\delta(\mathit{SWF}(Y,\s))$, respectively.  We further define 
\[\underline{\delta}(Y,\s)=\underline{\delta}(\mathit{SWF}(Y,\s)) \; \text{and} \; \overline{\delta}(Y,\s)=\overline{\delta}(\mathit{SWF}(Y,\s)).\]
All these quantities are invariant under homology cobordism. 

\smallskip
\emph{Proof of Theorems \ref{thm:dvsz4d}-\ref{thm:z4dvsg}.}  These Theorems follow by applying Propositions \ref{prop:z2z4}-\ref{prop:dupdownfromg} to $\mathit{SWF}(Y,\s)$, and dualizing.

\end{defn}

\subsection{Equivariant Homology of subgroups of $G$}\label{subsec:main}
Here we make precise and prove Theorem \ref{thm:main}.  We first define generalized Fr{\o}yshov invariants.

Let $H\subseteq G$ be a Lie subgroup of $G$.  Note that $H^*(BG)$ is periodic; that is, cup product with $v\in H^*(BG)$ defines an isomorphism of $\f$-modules \[H^n(BG)\rightarrow H^{n+4}(BG)\] for all $n\geq 0$.  It turns out that $H^*(BH)$ is also periodic; fix $P \in H^*(BH)$ so that cup product with $P$ induces an isomorphism $H^*(BH)\rightarrow H^{*+\mathrm{deg}\, P}(BH)$.    

For $X$ a space of type SWF at level $s$, let $\iota: X^{S^1}\to X$ denote the inclusion map of the $S^1$-fixed point set, and let $\iota^*$ denote the induced map in Borel cohomology
\[
\iota^*:\tilde{H}^{*}_H(X) \to \tilde{H}^*_H(X^{S^1})=H^{*+s}(BH).
\]
\begin{defn}\label{def:genfroy}
For a homogeneous element $e$ of $H^*(BH)/P$ (with $\mathbb{Z}/\mathrm{deg}\, P$-grading) and $X\in \E$ we define the \emph{generalized Fr{\o}yshov invariant} $\delta_{H,e}(X)$ by:
\begin{equation}\label{eq:genfroy}
\frac{\min \{ m \equiv 2\mu(X)+\mathrm{deg}\, e \, \bmod{(\deg P)} \mid \exists x \in \tilde{H}^m_H(X),\; \iota^*x=P^ke, \text{ for some } k\}-\mathrm{deg}\; e}{2}.
\end{equation}  
\end{defn} 
The well-definedness of the quantity $\delta_{H,e}(X)$ is guaranteed by the Equivariant Localization Theorem.  It is apparent that all of $\alpha, \beta, \gamma, \delta$ and $\underline{\delta}$ and $\overline{\delta}$ are special cases of generalized Fr{\o}yshov invariants.

\begin{thm}\label{thm:mainunder}
Let $H \subset G$ a Lie subgroup, and let $\{\delta_{H,e}\}$ be the set of generalized Fr{\o}yshov invariants associated to $H$.  Then \[\{ \delta_{H,e} \}\subseteq \{\delta,\underline{\delta},\overline{\delta},\alpha,\beta,\gamma\},\]
where the generalized Fr{\o}yshov invariants are viewed as maps $\E \rightarrow \mathbb{Z}$.  Moreover, $\underline{\delta}(X)$ and $\overline{\delta}(X)$ are not generally determined by $\tilde{H}^G_*(X)$.\end{thm}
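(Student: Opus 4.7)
My plan is a case analysis over the closed Lie subgroups $H \subseteq G$: the trivial group, cyclic subgroups $\mathbb{Z}/n \subset S^1$, $\mathbb{Z}/4 = \langle j \rangle$, $S^1$, generalized quaternion groups $Q_{4m} = \langle e^{\pi i/m}, j\rangle$ for $m \geq 2$, and $G$ itself. The first task will be to match the six standard invariants with $\delta_{H,e}$ for the obvious $H$ and $e$: taking $P$ to be the periodicity generator in the lowest positive degree, the coset representatives $\{1, q, q^2\}$ in $H^*(BG;\f)/(v)$ should recover $\alpha, \beta, \gamma$ respectively; the representative $1 \in H^*(BS^1;\f)/(U)$ should recover $\delta$; the representatives $\{1, Q\}$ in $H^*(B\mathbb{Z}/4;\f)/(U)$ should recover $\ovrr$ and $\undd$ respectively; and the representative $1 \in H^*(B\mathbb{Z}/2;\f)/(W)$ recovers $\delta$ via Lemma \ref{lem:z2s1}.

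The remaining subgroups will be handled by change-of-group Gysin sequences in the spirit of Section \ref{sec:gysin}. For $H = \mathbb{Z}/2^k$ with $k \geq 2$ one has $H^*(BH;\f) = \f[a,b]/(a^2)$ with $\deg a = 1, \deg b = 2$, and the Euler class of the sphere bundle $S^1 \to BH \to BS^1$ equals $2^k$ times a generator of $H^2(BS^1;\mathbb{Z})$, hence vanishes in $\f$-coefficients. The resulting short exact Gysin sequence
\[
0 \to \tq_{S^1}(X) \to \tq_H(X) \to \tq_{S^1}(X) \to 0
\]
(with the appropriate grading shift), compatible with the analogous sequence for $X^{S^1}$ via naturality, should allow me to match both generalized Fr{\o}yshov invariants of $H$ (for $e = 1$ and $e = a$) with $\delta$, by an argument in the style of Lemma \ref{lem:z2s1}. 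The trivial group and $\mathbb{Z}/n$ with $n$ odd have $H^*(BH;\f) = \f$ concentrated in degree $0$; the associated generalized Fr{\o}yshov invariants are determined by $\mu(X)$ alone and do not produce new invariants on $\ttz$.

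The main obstacle, in my estimation, is $H = Q_{4m}$ for $m \geq 2$. The key geometric observation will be that $j \in Q_{4m}$ exchanges the two components of $G$, so $G = S^1 \cdot Q_{4m}$ and $G/Q_{4m} \cong S^1$ as an $S^1$-space. This yields a sphere bundle $S^1 \to BQ_{4m} \to BG$ with Euler class $e(G, Q_{4m}) \in H^2(BG;\f) = \f\cdot q^2$. Pulling back along $B\mathbb{Z}/4 \to BQ_{4m}$ and comparing with Proposition \ref{prop:listofgys}(\ref{itm:typ2}) should pin down this Euler class: it is nontrivial (equal to $q^2$) when $m$ is even, and vanishes when $m$ is odd. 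I will then use the resulting abstract Gysin sequence for $X \in \E$ to relate $\tq_{Q_{4m}}(X)$ to $\tq_G(X)$, and perform a case analysis on the coset representatives of $H^*(BQ_{4m};\f)/P$, using compatibility with the restriction $\iota^*$ to the $S^1$-fixed point set, to identify each $\delta_{Q_{4m}, e}$ with one of $\alpha, \beta, \gamma, \undd, \ovrr$. The intricacy comes from the fact that $H^*(BQ_{4m};\f)$ has several low-degree generators with nonlinear relations, so enumerating the relevant $e$'s and tracking the Gysin sequence's interaction with $\iota^*$ requires explicit knowledge of the ring structure (and separate subcases for $m$ even versus odd).

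For the last sentence of the theorem, I would simply cite Example \ref{ex:weirdgkoszulexamples}, which exhibits a pair of spaces of type SWF with isomorphic $\tq_G$-modules but different $\tq_{\mathbb{Z}/4}$-modules, and hence different $\undd$ or $\ovrr$; this shows that $\tq_G(X)$ does not determine these $\mathbb{Z}/4$-invariants, and in particular that the inequalities of Proposition \ref{prop:dupdownfromg} cannot in general be upgraded to equalities.
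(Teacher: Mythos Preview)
Your overall architecture matches the paper's: case analysis over closed subgroups of $G$, reduction to $\alpha,\beta,\gamma,\delta,\undd,\ovrr$ via Gysin-type comparisons, and an appeal to Example~\ref{ex:weirdgkoszulexamples} for the final sentence. Two points need correction or sharpening.

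\textbf{The Euler class parity is reversed.} For $H=Q_{4m}$ with $m\geq 2$, the abelianization of $Q_{4m}$ is $(\mathbb{Z}/2)^2$ when $m$ is even and $\mathbb{Z}/4$ when $m$ is odd, so $H^1(BQ_{4m};\f)$ has dimension $2$ for $m$ even and $1$ for $m$ odd. Running the Gysin sequence for $S^1\to BQ_{4m}\to BG$ in degree~$1$ then forces $e(G,Q_{4m})=0$ for $m$ even and $e(G,Q_{4m})=q^2$ for $m$ odd---the opposite of what you wrote. (Your own comparison with Proposition~\ref{prop:listofgys}(\ref{itm:typ2}) gives this: the fiber map $G/\mathbb{Z}/4\to G/Q_{4m}$ is an $m$-fold cover, so in $\f$-coefficients $q^2=e(G,\mathbb{Z}/4)=m\cdot e(G,Q_{4m})$, which pins down the odd case and is vacuous for $m$ even.) Consequently, for $m$ even the sequence splits as $H^*(BQ_{4m})\cong H^*(BG)\oplus H^*(BG)[-1]$, while for $m$ odd one has $H^*(BQ_{4m})\cong H^*(B\mathbb{Z}/4)$ and the invariants agree with $\undd,\ovrr$ by the argument of Lemma~\ref{lem:z2s1}. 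The theorem is still true under your swap, but the identifications (and hence Corollary~\ref{cor:maincor}) come out wrong.

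\textbf{The squeeze argument for $m$ even is missing.} In the split case there are eight homogeneous coset representatives in $H^*(BQ_{4m})/(v)$: $1,\,q,\,r,\,q+r,\,q^2,\,qr,\,q^2+qr,\,q^2r$, where $r$ generates the shifted summand. The paper's mechanism for collapsing all eight to $\{\alpha,\beta,\gamma\}$ is the elementary divisibility inequality
\[
\delta_{H,f}(X)\ \geq\ \delta_{H,e}(X)\quad\text{whenever }f\mid e,
\]
combined with one-sided Gysin inequalities in the style of Propositions~\ref{prop:dfromg}--\ref{prop:dupdownfromg} (e.g.\ $\delta_r\geq\alpha$ and $\delta_1\leq\alpha$, then $\delta_r\leq\delta_1$ forces equality). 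Your sketch does not mention this squeeze, and without it a bare ``case analysis on coset representatives'' will only produce inequalities, not the needed equalities.

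A minor omission: you treat $\mathbb{Z}/2^k\subset S^1$ but not general even $\mathbb{Z}/n\subset S^1$; since the odd part contributes nothing to $\f$-cohomology, these reduce to the $\mathbb{Z}/2$ or $\mathbb{Z}/2^k$ cases already handled.
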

\begin{proof}
We refer to Example \ref{ex:weirdgkoszulexamples} for the last assertion, so we need only determine $\delta_{H,e}$.

First, consider strict subgroups $\mathbb{Z}/n= H \subset S^1$.  

If $n$ is odd, then $H^*(B\mathbb{Z}/n; \mathbb{Z}/2)\cong \f$, concentrated in degree $0$, and so there are no generalized Fr{\o}yshov invariants.  For $n$ even, $H^*(B\mathbb{Z}/n;\mathbb{Z}/2) \cong H^*(BS^1;\mathbb{Z}/2)$, and in particular the only associated generalized Fr{\o}yshov invariant is $\delta_{\mathbb{Z}/n,1}$.  The same argument as in Lemma \ref{lem:z2s1} shows $\delta_{\mathbb{Z}/n,1}=\delta$.  Thus the generalized Fr{\o}yshov invariants associated to a subgroup of $S^1$ are determined by $\delta$ and determine $\delta$.  
  
Next, consider a strict subgroup $H\subset G$ not contained in $S^1$ and not equal to $\mathbb{Z}/4$.  Then $H$ is a generalized quaternion group $Q_{4m}=\langle e^{\pi i /m}, j\rangle$ with $m\geq 2$.

First, say $m$ even.  We note $H^1(BQ_{4m})=\mathrm{Hom}\;(Q_{4m},\f)=\f^2$.  Then, since $H^1(BG)=\f$, we see that the Gysin sequence associated to the sphere bundle \[S^1 \rightarrow BQ_{4m}\rightarrow BG\] splits: 
\begin{equation}\label{eq:gysingsplitting}  
H^*(BQ_{4m})=H^*(BG) \oplus H^*(BG)[-1]
\end{equation}
as an $H^*(BG)$-module.  Recall $H^*(BG)$ acts on $H^*(BQ_{4m})$ by the map $p:H^*(BG)\rightarrow H^*(BQ_{4m})$.  

Let $r$ generate $H^1(BG)[-1]$ in the decomposition (\ref{eq:gysingsplitting}).  Then the homogeneous elements of $H^*(BQ_{4m})/(v)$ are
\[ 1,\;q,\;q+r,\;r, \;q^2,\;q^2+qr,\;qr,\; \mathrm{and} \; q^2r\]
Furthermore, we note from the definition of $\delta_{H,e}$ that, for $X\in \E$,
\begin{equation}\label{eq:froyorder}
\delta_{H,f}(X)\geq \delta_{H,e}(X) \; \mathrm{if} \; f\; \text{divides}\; e.
\end{equation}
Repeating the argument in the proof of Propositions \ref{prop:dfromg} and \ref{prop:dupdownfromg}, we see
\begin{equation}\label{eq:badgineqs}\delta_{r}(X)\geq \alpha(X),\; \delta_{q+r}(X)\geq \alpha(X),\;\delta_{qr}(X)\geq \beta(X),\;  \delta_{qr+q^2}(X)\geq \beta(X),\; \delta_{q^2r}(X)\geq \gamma(X),   \end{equation}
\begin{equation}\label{eq:badgineqs2}\delta_{1}(X)\leq \alpha(X),\; \delta_{q}(X)\leq \beta(X),\; \delta_{q^2}(X)\leq \gamma(X).\end{equation}
However, $\delta_{r}(X)\leq \delta_{1}(X)$ by (\ref{eq:froyorder}), so $\delta_{r}(X)\leq \delta_{1}(X)=\alpha(X)$.  Similarly, one obtains that all the inequalities in (\ref{eq:badgineqs}) and (\ref{eq:badgineqs2}) are equalities.  Thus $\delta_{H,e}(X)$ are in fact determined by $\alpha(X), \beta(X)$ and $\gamma(X)$.  This completes the proof for the $m$ even case.  

If $m$ is odd, we have $H^*(BQ_{4m})\cong H^*(B\mathbb{Z}/4)$.  The argument of Lemma \ref{lem:z2s1} then adapts to show that the generalized Fr{\o}yshov invariants of $Q_{4m}$ and $\mathbb{Z}/4$ agree. 
\end{proof}

Theorem \ref{thm:main} follows from Theorem \ref{thm:mainunder}, while Corollary \ref{cor:maincor} is a consequence of the proof of Theorem \ref{thm:mainunder}.  We close with an example showing that $\mathit{SWFH}^G_*(Y,\s)$, as an $H^*(BG)$-module, does not determine $\delta$, $\overline{\delta}$, or $\underline{\delta}$.  

\begin{examp}\label{ex:weirdgkoszulexamples}
There are pointed $G$-stable homotopy types $X_1$ and $X_2$ so that 
\[\tilde{H}^*_G(X_1)=\tilde{H}^*_G(X_2)=\bv_{8}\oplus \bv_1 \oplus \bv_2 \oplus \f^2_{3}\oplus \f_{4}\]
where $\bv_n$ denotes the $\f[v]$-module $\f[v]$, with grading shifted up by $n$, and $\f_n$ denotes a copy of $\f$ concentrated in degree $n$.  Furthermore,
\[\overline{\delta}(X_1)=\delta(X_1)=2, \; \underline{\delta}(X_i)=0,  \; \text{and} \; \overline{\delta}(X_2)=\delta(X_2)=3. \]
To specify the $q$-action, let $t_8, t_1,$ and $t_2$ be $\f[v]$-generators of $\bv_8$, $\bv_1$, and $\bv_2$ respectively, while $y_3,y'_3$ generate $\f^2_3$ and $y_4$ generates $\f_4$.  Then $qt_8=v^2t_1, qt_1=t_2$, $qt_2=y_3$ and $qy_3'=y_4$.  

We give a description of the chain complexes of $X_1$ and $X_2$ over $C^{CW}_*(G)=\f[s,j]/(sj=j^3s,s^2=j^4+1=0)$ where $\mathrm{deg}\;s=1,$ $\mathrm{deg}\; j=0$.  Indeed $C^{CW}_*(X_1)$ is $\f[f,x_1,x_3,x_4,x_5,y_3]$ with $\partial(x_1)=f$, $\partial(x_3)=(1+j)^3sx_1$, $\partial(x_4)=(1+j)x_3$, $\partial(x_5)=(1+j)x_4+sx_3$ and $\partial(y_3)=(1+j)^2sx_1$.  We have $C^{CW}_*(X_2)=\f[f,x_1,x_3,x_4,y_3,y_5]$ where the differentials are as before, and $\partial(y_5)=(1+j)^2sy_3$.  The calculation of the Manolescu invariants for both examples is an application of the techniques of \cite{betaseifert}, \cite{betasums}.  
\end{examp}

\bibliography{rmkpin.bib}
\bibliographystyle{plain}
\end{document}